\theoremstyle{plain}
\newtheorem{theorem}{Theorem}[section]
\newtheorem{lemma}[theorem]{Lemma}
\newtheorem{prop}[theorem]{Proposition}
\theoremstyle{remark}
\newtheorem{definition}[theorem]{Definition}
\newtheorem{rem}[theorem]{Remark}
\def \a{\alpha}   \def \d{\delta}
   \def \e{\epsilon}
\def \s{\sigma} \def \l{\lambda}  \def \o{\omega}
\def \O{\Omega} \def \D{\Delta}
\def\leq{\leqslant}
\def\tilde{\widetilde}
\def\hat{\widehat}
\begin{document}

\begin{frontmatter}
\title{Finite ergodic components  for upper probabilities}
\runtitle{Finite ergodic components  for upper probabilities}

\begin{aug}
\author[A,D]{\fnms{Chunrong}~\snm{Feng}\ead[label=e1]{chunrong.feng@durham.ac.uk}\orcid{0000-0001-5244-4664}},
\author[B]{\fnms{Wen}~\snm{Huang}\ead[label=e2]{wenh@mail.ustc.edu.cn}\orcid{0000-0003-1074-1331}}
\author[C,A,B]{\fnms{Chunlin}~\snm{Liu}\ead[label=e3]{chunlinliu@mail.ustc.edu.cn} \orcid{0000-0001-6277-013X}}

\author[A,D]{\fnms{Huaizhong}~\snm{Zhao}\ead[label=e4]{huaizhong.zhao@durham.ac.uk}\orcid{0000-0002-8873-2040}}
\address[A]{Department of Mathematical Sciences, Durham University, DH1 3LE, United Kingdom
}

\address[B]{School of Mathematical Sciences, University of Science and Technology of China, Hefei, Anhui, 230026, P.R. China
}
\address[C]{School of Mathematical Sciences, Dalian University of Technology, Dalian, 116024, P.R. China
}
\address[D]{Research Center for Mathematics and Interdisciplinary Sciences, Shandong University, Qingdao 266237, China 
}
\address{
	\printead[presep={\ }]{e1,e2,e3,e4}
}
\end{aug}

\begin{abstract}
Under the notion of ergodicity of upper probability in the sense of Feng and Zhao (2021) that  any invariant set either has capacity $0$ or its complement has capacity 0, we introduce the definition of finite ergodic components (FEC). We prove an invariant upper probability has FEC if and only if  it is in the regime that any invariant set has either capacity $0$ or capacity $1$, proposed by 	Cerreia-Vioglio, Maccheroni, and Marinacci (2016). Furthermore, this is also equivalent to that the eigenvalue $1$ of the Koopman operator is of finite multiplicity, while  in the  ergodic upper probability regime, as in the classical ergodic probability case, the  eigenvalue $1$ of the Koopman operator is simple.

Additionally, we obtain the equivalence of the law of large numbers with multiple values, the asymptotic independence and the FEC. Furthermore, we apply these to obtain the corresponding results for non-invariant probabilities.
\end{abstract}

\begin{keyword}
\kwd{capacity;  finite ergodic components; Koopman operator; multiplicity of eigenvalue; ergodic theorem for non-invariant probability}
\end{keyword}

\end{frontmatter}

	\section{Introduction}
Let $(\O,\mathcal{F})$ be a measurable space and $\D(\O,\mathcal{F})$ be the space consisting of all finitely additive probabilities on $(\O,\mathcal{F})$.  Recall a set-valued function $\mu: \mathcal{F} \rightarrow[0,1]$ is
a capacity if $\mu(\emptyset)=0, \mu(\Omega)=1$, and $\mu(A) \leq \mu(B) $ for all $A, B \in \mathcal{F}$ such that $A \subset B$. For a  capacity $\mu$, we denote its  core by  
$$
\operatorname{core}(\mu)=\{P\in\D(\O,\mathcal{F}):P(A)\le \mu(A)\text{ for any }A\in\mathcal{F}\}.
$$
In this paper, we focus on a class of subadditive capacities, that is, upper probabilities. More precisely, a capacity $V$ on $(\O,\mathcal{F})$ is  said to be an upper probability if $V=\sup_{P\in\operatorname{core}(V)}P$ and $\lim_{n \rightarrow \infty}V(A_n)=0$ for any $A_n\in\mathcal{F}$ with $A_n\downarrow \emptyset$. It is easy to see that when $V$ is an upper probability then each element in $\operatorname{core}(V)$ is a probability.

If an upper probability $V$ is invariant with respect to a measurable transformation $T:\O\to\O$, i.e., $V(T^{-1}A)=V(A)$ for any $A\in\mathcal{F}$, then $(\O,\mathcal{F},V,T)$ is called an upper probability preserving system (UPPS for short), where $T^{-1}A:=\{\o\in \O:T\o\in A\}$.
In order to achieve the irreducibility of an UPPS  $(\O,\mathcal{F},V,T)$, following the ideas by Feng and Zhao \cite{FengZhao2021} in the realm of sublinear expectations, Feng, Wu and Zhao \cite{FWZ2020} gave the corresponding definition of ergodicity, 
namely, an invariant upper probability $V$ is said to be ergodic if for any $A\in\mathcal{I}$, one has
\[\text{ $V(A)=0$ or $V(A^c)=0$,}\]
where $\mathcal{I}:=\{A\in\mathcal{F}:T^{-1}A=A\}$. 
Recently, Feng, Huang, Liu and Zhao \cite{FHLZ}, proved that for an ergodic upper probability $V$, there exists a unique invariant probability $Q$ in its core,  and this probability $Q$ is ergodic in the classical sense. Meanwhile, it was proven that $V$ is absolutely continuous with respect to $Q$ (see Lemma \ref{lem:core of invariant lower probabilities} for details). 

In the following (see Lemma \ref{lem:finitely many probability}), we will see that for any invariant upper probability, there are at most finitely many ergodic probabilities in its core. With this result and  an invariant probability in the core of an ergodic upper probability result in \cite{FHLZ}, it is then natural to consider a situation that an invariant upper probability $V$ can be decomposed into finitely many ergodic components in the following sense.
\begin{definition}
	Let $(\O,\mathcal{F},V,T)$ be an UPPS. The upper probability $V$ is said to be of finite ergodic components if there exists a partition $\a=\{A_1,\ldots,A_n\}\subset \mathcal{I}$, such that for each $i\in\{1,2,\ldots,n\}$, $V_i$  is an ergodic upper probability with respect to $T$ on $(\O,\mathcal{F})$, where 
	\begin{equation}\label{eq:maindeifnition}
		V_i(A):=\sup\{P(A):P\in\operatorname{core}(V)\text{ and } P(A_i)=1\}\text{ for any }A\in\mathcal{F}.
	\end{equation}
\end{definition} 
\begin{rem}
	For each $i\in\{1,2,\ldots,n\}$,  $V_i$ can be viewed as an ergodic upper probability on $(A_i,\mathcal{F}_i)$, where $\mathcal{F}_i=\{A\in\mathcal{F}:A\subset A_i\}$, as $V_i(A_i^c)=0$, $i=1,2,\ldots,n$.
\end{rem}

In the following, we denote by $\mathcal{ M}(T)$ the set of all invariant probabilities on $(\O,\mathcal{F})$ and $\mathcal{ M}^e(T)$ the set of all ergodic probabilities on $(\O,\mathcal{F})$. In the classical ergodic theory, it is well-known that  every invariant probability has an ergodic decomposition  (see \cite[Theorem 4.8]{Ward2011} for example). In particular, if $\mathcal{ M}^e(T)$  has only finitely many elements, each invariant probability is a  unique convex combination of ergodic probabilities.
In this paper, we will prove each upper probability with finite ergodic components has a local ergodic decomposition (see Theorem \ref{thm:1}). Specifically, let $V$ be an invariant upper probability, and denote  the set $\mathcal{ M}^e(T)\cap\operatorname{core}(V)=\{Q_1,\ldots,Q_n\}$\footnote{By Lemma \ref{lem:finitely many probability}, its cardinality must be finite.}.  Then $V$ is of finite ergodic components if and only if  
for any invariant probability $P\in \operatorname{core}(V)$  there exist $0\le \a_1,\ldots,\a_{n}\le 1$ with $\sum_{i=1}^{n}\a_i=1$  such that 
\[P=\a_1Q_1+\ldots+\a_nQ_n,\] where this decomposition is unique. In fact, we will provide a decomposition for all invariant upper probabilities, without the requirement that they are of finite ergodic components (see Theorem  \ref{thm:decompostion for all invariant upps}).

Cerreia-Vioglio, Maccheroni, and Marinacci \cite{CMM2016} called a capacity $\nu$  ergodic if $\nu(\mathcal{I})=\{0,1\}$, in parallel to the definition of the invariant measure $P$ being ergodic if $P(\mathcal{I})=\{0,1\}$. They were mainly interested in the case of a lower probability $\nu$. It is noted, due to the duality of $\nu$ and an upper probability $V$, that $\nu(\mathcal{I})=\{0,1\}$ is equivalent to $V(\mathcal{I})=\{0,1\}$. Thus in the sense of \cite{CMM2016}, an invariant upper probability $V$ was defined as ergodic if $V(\mathcal{I})=\{0,1\}$. 

It is easy to see that the $V(\mathcal{I})=\{0,1\}$ condition is weaker than the concept of ergodicity introduced in  \cite{FengZhao2021}. 
The important point is that the $\{0,1\}$ condition does not guarantee the indecomposability  of dynamical systems, in contrast to the classical ergodic theory of $\sigma$-additive probabilities.  However, it was proved in  \cite{FengZhao2021, FWZ2020} that the ergodicity in the sense of  \cite{FengZhao2021} is equivalent to that the dynamical system is not decomposable, and also equivalent to the eigenvalue $1$ of the Koopman transformation operator is simple. All these important features are the same as in the classical invariant probability case.

Although the condition $V(\mathcal{I})=\{0,1\}$  cannot prevent  an UPPS  to be decomposed into smaller ergodic components, it still has many interesting implications. For example,  the authors \cite{CMM2016} proved the existence of each Birkhoff's sum and provided upper and lower bounds for it under the condition that $V(\mathcal{I})=\{0,1\}$ (see also \cite{sheng2023continuous} by  Sheng and Song).
In this paper, we will prove that $V(\mathcal{I})=\{0,1\}$  is equivalent to $V$ being of finite ergodic components, and also equivalent to that Birkhoff's ergodic theorem holds with finite multiple values. Moreover, each component $(A_i,\mathcal{F}_i,V_i)$ is ergodic in the sense of \cite{FengZhao2021} (see Theorem \ref{thm:1}). As a corollary, we provide a concrete value for each Birkhoff's sum. This is a crucial result as we notice that without knowing the concrete (multi-)values, it would be impossible to get the above equivalence and many other results such as the eigenvalue multiplicity result mentioned below.
In this connection, some other characterizations of finite ergodic components by invariant functions and independence are also provided (see Sections \ref{sec: invariant functions} and \ref{sec:independence}). As a consequence, the finite ergodic components condition is equivalent to that the eigenvalue $1$ of the Koopman operator is of finite multiplicity. These new results give equivalent descriptions of an UPPS in the regime $V(\mathcal{I})=\{0,1\}$,  in terms of its dynamics, and equilibria in the statistical sense and the analytic sense, respectively. They are in a clear contrast with, but strongly related to, that 
of the  ergodicity in the sense of \cite{FengZhao2021}.

An important application  of  nonlinear probability theory is to study  the Birkhoff's ergodic theorem  for  non-invariant probabilities,  as observed in \cite{FHLZ}, namely, for a probability space $(\O,\mathcal{F},P)$ and an invertible measurable map $T:\O\to\O$. The program of studying an ergodic theory for non-invariant probabilities was initiated by Hurewicz \cite{Hurewicz1944}. In the authors' previous article \cite{FHLZ}, we utilized nonlinear methods to further investigate the results presented in \cite{Hurewicz1944}.
As an application of the nonlinear results from this paper, we derive a version of Birkhoff's ergodic theorem for a broader class of non-invariant probabilities (see Theorem \ref{thm:noninvariant}). Namely, let $(\O,\mathcal{F},P)$ be a probability space  and $T:\O\to \O$ be a measurable invertible transformation (where $P$ does not need to be invariant). If the limit 
$\lim_{N\to\infty}\frac{1}{N}\sum_{i=0}^{N-1}P(T^{-i}A)$ exists for any $A\in\mathcal{F}$, then the following statements are equivalent:
\begin{enumerate}
	\item [(i)] $\#(P(\mathcal{I}))<\infty$;
	\medskip
	\item [(ii)] there exist $Q_1,\ldots,Q_n\in\mathcal{M}^e(T)$ and  a  partition $\a=\{A_1,\ldots,A_n\}\subset\mathcal{I}$ with  $P(A_i)>0$, $i=1,2,\ldots,n$, such that for any bounded $\mathcal{F}$-measurable function $f$, 
	\begin{equation*}
		\lim_{n\to\infty}\frac{1}{n}\sum_{i=0}^{n-1}f\circ T^i=\sum_{j=1}^{n}\left(\int fdQ_j\right)1_{A_j},  ~~P\text{-a.s.}
	\end{equation*}
\end{enumerate}
In particular, if $\#(P(\mathcal{I}))=2$, i.e., $P(\mathcal{I})=\{0,1\}$, then we  obtain the corresponding result in \cite{FHLZ}. 
\medskip

\medskip

The structure of the paper is as follows. In Section \ref{sec:prelimilaries}, we review some basic notions and establish fundamental properties that will be used throughout the paper. In Section \ref{sec:ergodic decomposition}, we prove a local ergodic decomposition of invariant upper probabilities. As corollaries, we obtain the equivalence between finite ergodic components and $V(\mathcal{I})=\{0,1\}$, as well as a version of Birkhoff's ergodic theorem. In Sections \ref{sec: invariant functions} and \ref{sec:independence}, we provide characterizations of finite ergodic components in terms of invariant functions and independence, respectively. In the last section, we study non-invariant probabilities.
\section{Preliminaries}\label{sec:prelimilaries}
\subsection{Capacity and Choquet integral}
Firstly, we recall a result in classical probability theory, which can be found in \cite[III. 7.2, Theorem 2 and Corollary 8]{DunfordSchwarz1988}.
\begin{lemma}[Vitali-Hahn-Saks]\label{lem:VHStheorem}
	Let $(\O, \mathcal{F})$ be a measurable space, and $\{P_n\}_{n\in\mathbb{N}}$ be a sequence  of probabilities. Suppose that  for each $A\in \mathcal{F}$, the  limit  $\lim _{n \rightarrow \infty} P_n(A)$ exists, denoted by $Q(A)$. Then $Q$ is a probability on $(\O,\mathcal{F})$. If we further suppose that each $P_n$ is absolutely continuous with respect to the probability $Q$, then the absolute continuity of  $P_n$ with respect to $Q$ is uniform in $n\in\mathbb{N}$, that is, for any $\epsilon>0$ there exists $\d>0$ such that for  any $A\in\mathcal{F}$, if  $Q(A)<\d$ then $ P_n(A)<\e$ for all $n\in\mathbb{N}$. 
\end{lemma}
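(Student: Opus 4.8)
The plan is to derive both assertions from the Baire category theorem applied to the measure algebra of a dominating probability, which is the classical route to the Vitali--Hahn--Saks and Nikodym theorems. Since for the first assertion no reference measure is given, I would introduce the auxiliary probability $\l:=\sum_{n\in\mathbb N}2^{-n}P_n$. As $P_n\le 2^n\l$, every $P_n$ is absolutely continuous with respect to $\l$, and after identifying $A\sim B$ whenever $\l(A\triangle B)=0$ the quotient of $\mathcal F$ becomes a complete metric space under $d_\l(A,B):=\l(A\triangle B)$, on which each map $A\mapsto P_n(A)$ is continuous. Completeness here is the standard fact that the measure algebra of a finite countably additive measure is complete, proved by passing to a rapidly Cauchy subsequence and invoking Borel--Cantelli.

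The crux is an equi-absolute-continuity estimate obtained by category. Fixing $\e>0$, I would set
\[
F_m:=\{A\in\mathcal F:\ |P_p(A)-P_q(A)|\le \e/3\ \text{for all }p,q\ge m\}.
\]
Each $F_m$ is $d_\l$-closed by continuity of the $P_n$, and because the numerical sequence $(P_n(A))_n$ converges, hence is Cauchy, for every fixed $A$, one has $\bigcup_m F_m=\mathcal F$. Baire's theorem then provides an index $m_0$, a set $A_0$ and a radius $\eta>0$ with $\{A:\ d_\l(A,A_0)\le\eta\}\subset F_{m_0}$. For any $E$ with $\l(E)\le\eta$ both $A_0\cup E$ and $A_0\setminus E$ lie in this ball, so the identity $P_n(E)=P_n(A_0\cup E)-P_n(A_0\setminus E)$ combined with the defining inequalities of $F_{m_0}$ yields $P_n(E)\le P_{m_0}(E)+2\e/3$ for all $n\ge m_0$. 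Since the finitely many measures $P_1,\dots,P_{m_0}$ are individually absolutely continuous with respect to $\l$, further shrinking the threshold on $\l(E)$ gives $\sup_n P_n(E)<\e$, i.e. uniform absolute continuity of $\{P_n\}$ with respect to $\l$.

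The first assertion then follows at once: $Q$ is nonnegative, finitely additive (as a pointwise limit of finitely additive set functions) and satisfies $Q(\O)=1$, while for any $B_k\downarrow\emptyset$ one has $\l(B_k)\to0$, whence $Q(B_k)=\lim_nP_n(B_k)\le\sup_nP_n(B_k)\to0$; continuity at $\emptyset$ makes $Q$ a probability. For the second assertion I would repeat the category argument verbatim with $\l$ replaced by $Q$. This is now legitimate precisely because the first part shows $Q$ is a countably additive probability, so its measure algebra under $d_Q$ is complete, and the extra hypothesis $P_n\ll Q$ makes each $P_n$ continuous in $d_Q$; the uniform absolute continuity produced is exactly the claimed $\e$--$\d$ statement.

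I expect the main obstacle to be the category step and the accompanying set bookkeeping: verifying completeness of the measure algebra, checking that the $F_m$ are closed and exhaust $\mathcal F$, and---most delicately---converting the single ball produced by Baire into a bound valid simultaneously in $n$ through the decomposition $E=(A_0\cup E)\setminus(A_0\setminus E)$. Once uniform absolute continuity is in hand, deducing countable additivity of $Q$ and re-running the argument with $Q$ as reference measure are routine.
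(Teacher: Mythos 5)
Your proof is correct. The paper does not actually prove this lemma---it quotes it as classical, citing Dunford--Schwartz III.7.2 and its corollaries---and your argument (the auxiliary dominating probability $\lambda=\sum_{n}2^{-n}P_n$, the Baire category argument on the complete measure algebra with the closed sets $F_m$, the decomposition $P_n(E)=P_n(A_0\cup E)-P_n(A_0\setminus E)$ to convert the Baire ball into the uniform estimate, and then rerunning the argument with $Q$ as reference measure once its countable additivity is established) is precisely the standard proof of that cited result, so your proposal agrees with the paper's source in both statement and method.
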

Let $(\O,\mathcal{F})$ be an measurable space. We denote by $B_b(\O,\mathcal{F})$ the space of all $\mathcal{F}$-measurable and bounded functions. It is well-known that it is a Banach space with the maximum norm.
Given an upper probability, we call that a statement holds for $V$-almost surely ($V$-a.s. for short) if it holds on a set $A\in\mathcal{F}$ with $V(A^c)=0$.

Now we recall  Choquet integral introduced in \cite{Choquet1955} for an upper probability $V$ on $(\O,\mathcal{F})$,
\[\int_\O fdV=\int_{0}^{\infty}V(\{\o\in \O:f(\o)\ge t\})dt+\int_{-\infty}^{0}\left(V(\{\o\in \O:f(\o)\ge t\})-1\right)dt\]
for all $\mathcal{F}$-measurable functions $f$, where the integrals on the right-hand side are Lebesgue integrals. 
If there is no ambiguity, we will omit $\O$, and write $\int_\O$ as $\int$ for simplicity.

The following result provides a dominated convergence theorem in an upper probability space $(\O,\mathcal{F},V)$  with respect to Choquet integral, which was proved in \cite[Lemma 2.2]{FWZ2020}.
\begin{lemma}\label{lem:dominate convergence}
	For any $\mathcal{F}$-measurable functions $\{f_n\}_{n\in\mathbb{N}},$ $g$ and $h$ with $g\le f_n\le h$ for each $n\in\mathbb{N}$, and $\int gdV$, $\int hdV$ being finite, if  there exists a $\mathcal{F}$-measurable function $f$ such that  $\lim_{n\to\infty}f_n=f$, $V$-a.s. then 
	\[\lim_{n\to\infty}\int f_ndV=\int fdV.\]
\end{lemma}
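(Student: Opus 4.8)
The plan is to deduce the convergence by sandwiching each $f_k$ between monotone envelopes and passing to the limit in each envelope via a monotone convergence theorem for the Choquet integral. Concretely, I would set $v_k:=\sup_{n\ge k}f_n$ and $u_k:=\inf_{n\ge k}f_n$, so that $u_k\uparrow\tilde f$ and $v_k\downarrow\hat f$ pointwise on all of $\O$, where $\tilde f:=\liminf_n f_n$ and $\hat f:=\limsup_n f_n$. Since $u\le w$ forces $\{u\ge t\}\subseteq\{w\ge t\}$, monotonicity of $V$ makes the Choquet integral monotone, giving $\int u_k\,dV\le\int f_k\,dV\le\int v_k\,dV$ for every $k$. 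The hypothesis $f_n\to f$ $V$-a.s. yields $\tilde f=\hat f=f$ off a $V$-null set $N$; and because $V(N)=0$ together with monotonicity and subadditivity forces $V(\{u\ge t\})=V(\{w\ge t\})$ whenever $u=w$ off $N$, the Choquet integral depends only on the $V$-a.s. class, so $\int\tilde f\,dV=\int\hat f\,dV=\int f\,dV$. Thus it suffices to prove $\int u_k\,dV\to\int\tilde f\,dV$ and $\int v_k\,dV\to\int\hat f\,dV$, after which the squeeze gives the claim.

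These two convergences reduce, through the layer-cake formula, to passing the limit under the Lebesgue integrals $\int_0^\infty V(\{\cdot\ge t\})\,dt$ and $\int_{-\infty}^0\big(V(\{\cdot\ge t\})-1\big)\,dt$, and for this I first need one-sided continuity of $V$ along monotone sequences of sets. Continuity from above is essentially free: if $B_m\downarrow B$ then subadditivity gives $V(B_m)\le V(B)+V(B_m\setminus B)$, monotonicity gives $V(B_m)\ge V(B)$, and $V(B_m\setminus B)\to 0$ by the defining property $\lim_m V(A_m)=0$ for $A_m\downarrow\emptyset$; hence $V(B_m)\to V(B)$. Continuity from below I would extract from the representation $V=\sup_{P\in\operatorname{core}(V)}P$: every $P\in\operatorname{core}(V)$ is countably additive (being dominated by $V$, which vanishes on sequences decreasing to $\emptyset$), so for $B_m\uparrow B$ one interchanges the two suprema, $\lim_m V(B_m)=\sup_m\sup_P P(B_m)=\sup_P\lim_m P(B_m)=\sup_P P(B)=V(B)$.

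With both continuity properties in hand, for the decreasing envelopes $v_k\downarrow\hat f$ one has $\{v_k\ge t\}\downarrow\{\hat f\ge t\}$, so continuity from above gives $V(\{v_k\ge t\})\to V(\{\hat f\ge t\})$ at each level $t$; since $g\le v_k\le h$, the level-set functions are dominated by $V(\{h\ge t\})$ on $(0,\infty)$ and bounded below by $V(\{g\ge t\})-1$ on $(-\infty,0)$, both Lebesgue-integrable because $\int h\,dV$ and $\int g\,dV$ are finite, and dominated convergence yields $\int v_k\,dV\to\int\hat f\,dV$. Symmetrically, for $u_k\uparrow\tilde f$ I would use $\{u_k>t\}\uparrow\{\tilde f>t\}$ (passing from $\ge$ to $>$ alters the integrand only on a countable, hence Lebesgue-null, set of levels), continuity from below, and the same domination by $g$ and $h$ to obtain $\int u_k\,dV\to\int\tilde f\,dV$.

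The step I expect to be the main obstacle is the bookkeeping at the negative tail of the Choquet integral: one must verify that the level-set functions are Lebesgue-integrable on $(-\infty,0)$ and that the limit may be taken there, which is exactly the place where the lower envelope $g$ with $\int g\,dV>-\infty$ is indispensable, since domination by $h$ controls only the positive part. The remaining ingredients — monotonicity of the Choquet integral, the $\ge$ versus $>$ interchange at the level sets, and invariance under $V$-a.s. modification — are routine once the two one-sided continuity properties of $V$ have been secured.
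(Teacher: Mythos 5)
Your proof is correct, and it is worth noting that the paper itself supplies no argument for this lemma: it is quoted verbatim from, and attributed to, Lemma 2.2 of \cite{FWZ2020}, so there is no internal proof to compare against. Your envelope-and-squeeze route (via $u_k=\inf_{n\ge k}f_n$, $v_k=\sup_{n\ge k}f_n$, monotone convergence of the level sets, and invariance of the Choquet integral under $V$-a.s.\ modification) is the standard one, and the supporting steps you supply are exactly the ones that need checking in this paper's setting: continuity of $V$ from above at \emph{arbitrary} sets follows from subadditivity together with the defining condition $V(A_n)\to 0$ for $A_n\downarrow\emptyset$; continuity from below follows by commuting the two suprema in $V=\sup_{P\in\operatorname{core}(V)}P$ once one observes that every $P\in\operatorname{core}(V)$ is countably additive (being dominated by $V$, it is continuous at $\emptyset$); the $\ge$ versus $>$ level-set exchange for the increasing envelope is justified because the two monotone functions $t\mapsto V(\{u\ge t\})$ and $t\mapsto V(\{u>t\})$ differ only at countably many $t$; and the negative tail is dominated by the integrable function $1-V(\{g\ge t\})$. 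These derivations are genuinely needed here, because the paper's definition of an upper probability postulates only the core representation and continuity at $\emptyset$, whereas the cited reference works with capacities whose continuity from below and above is built into the hypotheses; your argument closes that gap and yields a proof that is self-contained within the paper's own axioms.
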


\subsection{Invariant skeleton}
The following result provides the structure of the core of invariant upper probabilities, which was proved in Lemma 2.12 and Theorem 3.2 of \cite{FHLZ}. 
\begin{lemma}\label{lem:core of invariant lower probabilities}
	Let $(\O,\mathcal{F},V,T)$ be an UPPS. Then
	\begin{enumerate}
		\item[(i)]	given any $P\in \operatorname{core}(V)$, there exists a unique $\hat{P}\in \mathcal{M}(T)\cap\operatorname{core}(V)$ such that $P(A)=\hat{P}(A)$ for any $A\in\mathcal{I}$;
		\medskip
		\item[(ii)] given $A\in\mathcal{F}$, if for any $P\in\operatorname{core}(V)$, $\hat{P}(A)=0$, then $V(A)=0$.
	\end{enumerate}
	If we further assume that $V$ is ergodic, then $\mathcal{ M}(T)\cap \mathrm{core}(V)$ has only one element, denoted by $Q$, which is ergodic, and  $V\ll Q$, i.e., for any $A\in\mathcal{F}$,  $Q(A)=0$ implies that $V(A)=0$.
\end{lemma}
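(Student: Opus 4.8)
The plan is to treat the three assertions in sequence: build the association $P\mapsto\hat P$ by time-averaging, establish its uniqueness via the classical Birkhoff theorem, and then use the (automatic) continuity from above of $V$ to deduce the null-set statement (ii); the ergodic addendum will then follow almost formally. For the existence in (i), I would set $P_N:=\frac1N\sum_{i=0}^{N-1}P\circ T^{-i}$. Since $(P\circ T^{-i})(A)=P(T^{-i}A)\le V(T^{-i}A)=V(A)$ by invariance and $\operatorname{core}(V)$ is convex, each $P_N\in\operatorname{core}(V)$. I then define $\hat P(A):=\operatorname{LIM}_N P_N(A)$ through a Banach limit (equivalently, pass to a weak-$*$ cluster point of $(P_N)$ in the dual of $B_b(\O,\mathcal F)$). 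Linearity makes $\hat P$ a finitely additive probability, and $\hat P(A)\le\operatorname{LIM}_N V(A)=V(A)$ gives $\hat P\in\operatorname{core}(V)$; the continuity of $V$ at $\emptyset$ forces every core element to be countably additive (if $A_n\downarrow\emptyset$ then $\hat P(A_n)\le V(A_n)\to0$), so $\hat P$ is a genuine invariant probability. Invariance follows from $|P_N(T^{-1}A)-P_N(A)|\le\frac2N\to0$, and for $A\in\mathcal I$ one has $P_N(A)=P(A)$ for every $N$, whence $\hat P(A)=P(A)$. (If one instead verifies that $\lim_N P_N(A)$ exists for each $A$, Lemma~\ref{lem:VHStheorem} provides the countable additivity directly.)

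For uniqueness in (i), suppose $\hat P_1,\hat P_2\in\mathcal M(T)\cap\operatorname{core}(V)$ agree with $P$, hence with each other, on $\mathcal I$. Put $\mu:=\tfrac12(\hat P_1+\hat P_2)$, an invariant probability with $\hat P_1,\hat P_2\ll\mu$. For $f\in B_b(\O,\mathcal F)$ the classical Birkhoff theorem (see \cite{Ward2011}) gives $\frac1N\sum_{i=0}^{N-1}f\circ T^i\to\mathbb E_\mu[f\mid\mathcal I]$ $\mu$-a.s. and $\to\mathbb E_{\hat P_j}[f\mid\mathcal I]$ $\hat P_j$-a.s.; absolute continuity turns the $\mu$-a.s. limit into a $\hat P_j$-a.s. limit, so $\mathbb E_{\hat P_j}[f\mid\mathcal I]=\mathbb E_\mu[f\mid\mathcal I]$ $\hat P_j$-a.s. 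Therefore $\int f\,d\hat P_j=\int\mathbb E_\mu[f\mid\mathcal I]\,d\hat P_j$, and since $\mathbb E_\mu[f\mid\mathcal I]$ is $\mathcal I$-measurable while $\hat P_1,\hat P_2$ coincide on $\mathcal I$, the two integrals agree; thus $\int f\,d\hat P_1=\int f\,d\hat P_2$ for all $f\in B_b(\O,\mathcal F)$, i.e. $\hat P_1=\hat P_2$. As a byproduct, $P\mapsto\hat P$ is independent of the Banach limit chosen.

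For (ii), fix $A$ with $\hat P(A)=0$ for every $P\in\operatorname{core}(V)$ and set $\tilde A:=\limsup_i T^{-i}A=\bigcap_N\bigcup_{i\ge N}T^{-i}A\in\mathcal I$. For each $P$, invariance of $\hat P$ gives $\hat P(T^{-i}A)=\hat P(A)=0$, so $\hat P(\tilde A)\le\hat P\big(\bigcup_{i\ge0}T^{-i}A\big)\le\sum_{i\ge0}\hat P(T^{-i}A)=0$; as $\tilde A\in\mathcal I$, part (i) yields $P(\tilde A)=\hat P(\tilde A)=0$, hence $V(\tilde A)=\sup_P P(\tilde A)=0$. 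To pass from $\tilde A$ to $A$ I would record that $V$ is continuous from above: it is subadditive (a supremum of measures), so for $C_n\downarrow C$ one has $C_n\setminus C\downarrow\emptyset$ and $V(C_n)\le V(C)+V(C_n\setminus C)\to V(C)$. Applying this to $B_\infty:=\bigcup_{i\ge0}T^{-i}A\supseteq A$, where $T^{-n}B_\infty\downarrow\tilde A$, invariance of $V$ gives $V(B_\infty)=V(T^{-n}B_\infty)\to V(\tilde A)=0$, so $V(A)\le V(B_\infty)=0$.

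For the ergodic addendum, note that for $A\in\mathcal I$ ergodicity forces $V(A)=0$ or $V(A^c)=0$, so any $Q\in\mathcal M(T)\cap\operatorname{core}(V)$ has $Q(A)\in\{0,1\}$ with the value dictated by $V$ alone; hence all such $Q$ agree on $\mathcal I$, and the uniqueness in (i) collapses $\mathcal M(T)\cap\operatorname{core}(V)$ to a single $Q$, which is ergodic because $Q(\mathcal I)=\{0,1\}$. Since $\hat P\in\mathcal M(T)\cap\operatorname{core}(V)=\{Q\}$ for every $P$, any $A$ with $Q(A)=0$ satisfies $\hat P(A)=0$ for all $P$, and (ii) gives $V(A)=0$, i.e. $V\ll Q$. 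I expect the main obstacle to be the uniqueness step in (i), which must reconcile three Birkhoff limits across $\mu$, $\hat P_1$ and $\hat P_2$ using the absolute-continuity relations; the secondary (but crucial) observation is that subadditivity together with continuity of $V$ at $\emptyset$ upgrades to full continuity from above, which is exactly what powers the reduction from $A$ to $\tilde A$ in (ii).
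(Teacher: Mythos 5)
Your proposal is correct, but there is nothing in the paper to compare it with: the paper does not prove this lemma at all, importing it instead from Lemma 2.12 and Theorem 3.2 of the companion paper \cite{FHLZ}. Judged on its own, your self-contained argument is sound at every step. The existence part is the standard Krylov--Bogolyubov device: the Ces\`aro averages $P_N=\frac1N\sum_{i=0}^{N-1}P\circ T^{-i}$ stay in $\operatorname{core}(V)$ by invariance of $V$ and convexity of the core; a Banach limit (or weak-$*$ cluster point) gives a finitely additive $\hat P\le V$; continuity of $V$ along $A_n\downarrow\emptyset$ upgrades $\hat P$ to a countably additive probability; the $O(1/N)$ estimate gives invariance; and $P_N=P$ on $\mathcal I$ gives the agreement. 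Your uniqueness step is in effect a proof of the classical fact that two invariant probabilities agreeing on $\mathcal{I}$ coincide --- exactly the fact this paper later invokes as ``Lemma 2.8 in \cite{FHLZ}'' inside the proof of Theorem \ref{thm:1} --- and your Birkhoff/conditional-expectation argument handles the transfer $\hat P_j\ll\mu$ correctly and needs no regularity of the underlying space. Part (ii) is also right: $\tilde A=\limsup_i T^{-i}A$ is invariant, $V(\tilde A)=\sup_{P}P(\tilde A)=\sup_P\hat P(\tilde A)=0$ by part (i), and the return from $\tilde A$ to $A$ via $V(B_\infty)=V(T^{-n}B_\infty)\to V(\tilde A)$ correctly exploits that subadditivity plus continuity at $\emptyset$ yields continuity from above; this is indeed where the monotone-continuity hypothesis in the definition of an upper probability does real work. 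The ergodic addendum then follows formally as you say. Two minor polish points: the invariance estimate is actually $|P_N(T^{-1}A)-P_N(A)|\le 1/N$ (your $2/N$ is of course still valid), and in the addendum you should state explicitly that $\mathcal M(T)\cap\operatorname{core}(V)\neq\emptyset$, which follows from part (i) together with $\operatorname{core}(V)\neq\emptyset$ before uniqueness can collapse it to a single point.
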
 
\begin{rem}
	In \cite{FHLZ}, the probability $\hat{P}\in\mathcal{ M}(T)\cap \mathrm{core}(V)$ was named  the invariant skeleton of the probability $P\in\mathrm{core}(V)$. Moreover, if $V$ is ergodic, the unique element $Q$ in $\mathcal{ M}(T)\cap\operatorname{core}(V)$ (also in $\mathcal{ M}(T)\cap\operatorname{core}(V)$) is the invariant skeleton for all $P\in\operatorname{core}(V)$.
\end{rem}
 
Recall that the set of all finitely additive probabilities is denoted by $\D(\O,\mathcal{F})$. Throughout this paper, we equip $\D(\O,\mathcal{F})$ with the topology induced by the weak* topology. Specifically, a net $\{P_\alpha\}_{\alpha \in I}$ converges to $P$ in the weak* topology if and only if $P_\alpha(A)$ converges to $P(A)$ for every $A \in \mathcal{F}$.

Let $V$ be an upper probability. Then $\mathrm{core}(V)$ is compact in the weak* topology (see, for example, \cite[Proposition 4.2]{MM}). Since $V$ is continuous, it follows that $\mathrm{core}(V)$ is a compact subset of $\D^\sigma(\O,\mathcal{F})$, the set of all probabilities on $(\O,\mathcal{F})$. We now prove that the set $\mathrm{core}(V) \cap \mathcal{M}(T)$ is also compact.
\begin{lemma}\label{lem:compactness of m}
The set $\mathrm{core}(V)\cap \mathcal{ M}(T)$ is a compact subset of $\D^\sigma(\O,\mathcal{F})$ in the weak* topology.
\end{lemma}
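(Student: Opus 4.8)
The plan is to exploit the fact, recorded just above the statement, that $\mathrm{core}(V)$ is weak*-compact and is contained in $\D^\sigma(\O,\mathcal{F})$. Since a closed subset of a compact set is compact, it suffices to show that $\mathrm{core}(V)\cap\mathcal{M}(T)$ is a weak*-closed subset of $\mathrm{core}(V)$. In fact I would establish the slightly cleaner statement that $\mathcal{M}(T)$ itself is weak*-closed, so that the desired set is the intersection of a compact set with a closed set, and therefore compact.

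To see the closedness, recall that in the weak* topology each evaluation map $P\mapsto P(B)$, for a fixed $B\in\mathcal{F}$, is continuous by the very definition of the topology. The invariance of $P$ is the requirement that $P(T^{-1}A)=P(A)$ for every $A\in\mathcal{F}$; since $T$ is measurable, both $A$ and $T^{-1}A$ lie in $\mathcal{F}$, so for each fixed $A$ the set $\{P:P(T^{-1}A)=P(A)\}$ is the preimage of $\{0\}$ under the continuous map $P\mapsto P(T^{-1}A)-P(A)$, hence weak*-closed. Consequently $\mathcal{M}(T)=\bigcap_{A\in\mathcal{F}}\{P:P(T^{-1}A)=P(A)\}$ is an intersection of weak*-closed sets, and thus weak*-closed.

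Equivalently, and this is how I would present the verification to avoid any point-set subtlety, I would argue directly with nets: if $\{P_\alpha\}$ is a net in $\mathrm{core}(V)\cap\mathcal{M}(T)$ converging weak* to some $P$, then $P\in\mathrm{core}(V)\subset\D^\sigma(\O,\mathcal{F})$ by compactness (hence closedness, the weak* topology being Hausdorff) of the core, and for every $A\in\mathcal{F}$ one has
\[
P(T^{-1}A)=\lim_\alpha P_\alpha(T^{-1}A)=\lim_\alpha P_\alpha(A)=P(A),
\]
where the outer equalities are weak* convergence applied to the sets $T^{-1}A$ and $A$, and the middle equality is the invariance of each $P_\alpha$. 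Thus $P\in\mathcal{M}(T)$, so $\mathrm{core}(V)\cap\mathcal{M}(T)$ is closed in $\mathrm{core}(V)$ and therefore compact.

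I do not anticipate a genuine obstacle here. The only points requiring care are that the weak* topology need not be metrizable, so the argument should be phrased with nets rather than sequences, and that the limit $P$ must be checked to be countably additive; but the latter is free, since it is inherited from the already-established inclusion $\mathrm{core}(V)\subset\D^\sigma(\O,\mathcal{F})$.
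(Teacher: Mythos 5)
Your net argument is correct and is essentially the paper's own proof: take a net in $\operatorname{core}(V)\cap\mathcal{M}(T)$ converging weak* to some $P$, use compactness of $\operatorname{core}(V)$ to conclude $P\in\operatorname{core}(V)\subset\D^\sigma(\O,\mathcal{F})$, and pass the invariance identity $P_\alpha(T^{-1}A)=P_\alpha(A)$ to the limit.

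One caveat on your first, ``cleaner'' formulation: the claim that $\mathcal{M}(T)$ itself is weak*-closed is false in the ambient space where this topology is defined, namely $\D(\O,\mathcal{F})$. The intersection $\bigcap_{A\in\mathcal{F}}\{P:P(T^{-1}A)=P(A)\}$, taken over all finitely additive probabilities, is indeed weak*-closed, but it equals the set of invariant \emph{finitely additive} probabilities, which in general properly contains $\mathcal{M}(T)$: countable additivity is not preserved under weak* limits of nets. In fact, the remark immediately following this lemma in the paper exhibits a system ($\O$ the rationals, $T$ the identity) for which $\mathcal{M}(T)=\D^\sigma(\O,\mathcal{F})$ is not weak*-compact; since $\D(\O,\mathcal{F})$ is weak*-compact (via the embedding $P\mapsto(P(A))_{A\in\mathcal{F}}$ into $[0,1]^{\mathcal{F}}$), a non-compact subset of it cannot be weak*-closed. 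Your conclusion survives this correction, because $\operatorname{core}(V)\subset\D^\sigma(\O,\mathcal{F})$, so intersecting the compact core with the closed set of invariant finitely additive probabilities yields exactly $\operatorname{core}(V)\cap\mathcal{M}(T)$; but the closedness statement for $\mathcal{M}(T)$ should either be amended in this way or simply dropped in favor of the net argument, which --- as you yourself indicate --- is the cleanest and is the one the paper uses.
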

\begin{proof}
Since $\mathrm{core}(V)$ is compact, we only need to prove that $\mathrm{core}(V)\cap \mathcal{ M}(T)$ is closed. Indeed, for any net $\{P_\a\}_{\a\in I}\subset \mathrm{core}(V)\cap \mathcal{ M}(T)$ and $P_\a$  converges to some  $P\in\D(\O,\mathcal{F})$. Using the fact that $\mathrm{core}(V)$ is compact again, we know that $P\in \mathrm{core}(V)\subset \D^\s(\O,\mathcal{F})$. 

Now it suffices to prove that $P$ is invariant. For any $A\in\mathcal{F}$, as $P_\a\in\mathcal{ M}(T)$, we have
\[P(A)=\lim_{\a\in I}P_\a(A)=\lim_{\a\in I}P_\a(T^{-1}A)=P(T^{-1}A).\]
Thus, $P$ is invariant and we finish the proof.
\end{proof}

\begin{rem}
Note that the  set $\mathcal{M}(T)$ may not be compact in the weak* topology. For example, we consider $\O$ is the set of   rational numbers, $\mathcal{F}$ is the Borel $\sigma$-algebra on $\O$ and $T$ is the identity map on $\O$. Then $\D^\sigma(\O,\mathcal{F})=\mathcal{M}(T)$. Now we choose a sequence $\{t_n\}_{n=1}^\infty$ of rational numbers such that it converges to $\sqrt{2}$. Then for any $n\in\mathbb{N}$, we have $P_n:=\d_{t_n}\in \mathcal{M}(T)$. Now we prove that the sequence $\{P_n\}_{n=1}^\infty$ has no convergence subnet, which implies that $\mathcal{M}(T)$ is not compact. 

By contradiction, we assume that  there exists a subnet $\{t_\a\}_{\a\in I}$ of $\{t_n\}_{n=1}^\infty$ such that $\{P_\a\}_{\a\in I}$ converges to a probability $P\in \mathcal{M}(T)$. We denote by $<_I$, the partial order on the directed set $I$.  
Now for any $x\in\O$, by the assumption that $\lim_{n\to\infty}t_n=\sqrt{2}$, there exists $\a_x>0$ such that for any $\a>_I \a_x$, $t_\a\neq x$, and hence $P(\{x\})=\lim_{\a\in I}P_\a(x)=0$. Consequently, $P(\O)=0$, as $\O$ is a countable set. This is a contradiction with the fact that $P$ is a probability, and so   $\mathcal{M}(T)$ is not compact in the weak* topology.

Meanwhile,  this example also shows that $\D^\sigma(\O,\mathcal{F})$ is not compact in the weak* topology.
\end{rem}

\subsection{Convex analysis}
Denote by
$\operatorname{ex}(A)$ the set of all extreme points of a convex set $A$, that is, the elements of $A$
that cannot be written as convex combinations of other elements.  The following result is classical  in convex analysis (see \cite[Theorem 8.14]{simon_2011} for example).
\begin{lemma}[Krein-Milman Theorem]\label{lem:Krein–Milman Theorem}
	Let $A$ be a compact convex subset of a locally convex vector space $X$. Then $A$ is the closed convex hull of $\operatorname{ex}(A)$.
\end{lemma}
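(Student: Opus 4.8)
The plan is to follow the classical two-step argument. Call a nonempty closed set $F\subset A$ an \emph{extreme subset} (or face) of $A$ if whenever $\lambda x+(1-\lambda)y\in F$ with $x,y\in A$ and $\lambda\in(0,1)$ one has $x,y\in F$; note that a singleton extreme subset is precisely an extreme point. Write $K$ for the closed convex hull of $\operatorname{ex}(A)$. Since $A$ is compact, hence closed, and convex, we have $K\subseteq A$, and the whole point is to establish the reverse inclusion $A\subseteq K$.

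First I would prove that every nonempty compact convex set has at least one extreme point. Consider the collection $\mathcal{C}$ of all extreme subsets of $A$, which is nonempty since $A\in\mathcal{C}$, partially ordered by reverse inclusion. Any chain in $\mathcal{C}$ admits an upper bound: its intersection is nonempty by the finite intersection property (each member is a closed subset of the compact set $A$, and finite subfamilies of a chain have nonempty intersection) and is readily checked to be an extreme subset. Zorn's Lemma then yields a maximal element in this order, i.e.\ a minimal extreme subset $F_0$. I claim $F_0$ is a singleton. If not, pick distinct $p,q\in F_0$; since $X$ is locally convex (and Hausdorff), the Hahn–Banach separation theorem supplies a continuous linear functional $f$ with $f(p)\neq f(q)$. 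The subset of $F_0$ on which $f$ attains its maximum over $F_0$ is nonempty by compactness, is a proper subset of $F_0$ because $f$ is nonconstant there, and is again an extreme subset of $A$, contradicting minimality. Hence $F_0=\{e\}$ for some $e\in\operatorname{ex}(A)$.

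Next I would rule out any point of $A$ lying outside $K$. Suppose, for contradiction, that $x_0\in A\setminus K$. As $K$ is compact and convex and disjoint from $\{x_0\}$, the Hahn–Banach separation theorem provides a continuous linear functional $f$ with
\[
f(x_0)>\sup_{y\in K}f(y).
\]
Let $m=\max_{x\in A}f(x)$, attained by compactness, and set $A_f=\{x\in A:f(x)=m\}$, a nonempty closed extreme subset of $A$. Applying the first step to the compact convex set $A_f$ produces an extreme point $e$ of $A_f$; a routine check shows that an extreme point of an extreme subset is an extreme point of the ambient set, so $e\in\operatorname{ex}(A)\subseteq K$. But then $f(e)=m\ge f(x_0)>\sup_{y\in K}f(y)\ge f(e)$, a contradiction. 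Therefore $A\subseteq K$, so $A=K$ as claimed.

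The main obstacle is the existence step: getting the Zorn's Lemma bookkeeping right (verifying that the intersection of a chain of faces is again a face and is nonempty via compactness) and, crucially, using Hahn–Banach separation to collapse a minimal face to a single point. The separation argument in the second step, together with the stability facts that faces of faces are faces and that extreme points of a face remain extreme in $A$, are comparatively routine once the locally convex structure is invoked to guarantee a supply of separating continuous linear functionals.
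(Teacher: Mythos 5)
The paper offers no proof of this lemma at all: it is quoted as a classical result with a pointer to \cite[Theorem 8.14]{simon_2011}, so there is no internal argument to compare against. Your proof is the standard Zorn's-lemma-plus-Hahn--Banach argument (minimal faces are singletons, then strict separation forces $A$ into the closed convex hull of its extreme points), it is correct as written --- including the necessary observation that Hausdorffness is needed for continuous functionals to separate points --- and it matches the canonical proof found in the cited reference.
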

\begin{rem}
	Since $\operatorname{core}(V)$ is a compact convex subset of the space consisting of all finitely additive measures under the weak* topology (see \cite[Proposition 4.2]{MM} for example), $\operatorname{core}(V)$  satisfies the condition in Lemma \ref{lem:Krein–Milman Theorem}.
\end{rem}

\section{Local ergodic decomposition of invariant upper probabilities}\label{sec:ergodic decomposition}
In this section, we will prove a local ergodic decomposition on $\operatorname{core}(V)$, which can be viewed as an extension of the result  in \cite{sheng2023continuous}, and moreover, we provide a Birkhoff's ergodic theorem for UPPSs with finite ergodic components.
Firstly, we have the following observation (see also \cite{sheng2023continuous}).
\begin{lemma}\label{lem: simple obser.}
	Let $(\O,\mathcal{F})$ be a measurable space, and $V$ be an upper probability. Then for any sequence of disjoint measurable sets $\{A_n\}_{n=1}^\infty$, \[\lim_{n\to\infty}V(A_n)=0.\]
\end{lemma}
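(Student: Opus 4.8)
The plan is to reduce the statement to the defining continuity-at-$\emptyset$ property of an upper probability by passing from the individual sets $A_n$ to their decreasing tail unions, which is the only structural input beyond monotonicity that we have available.

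First I would set $B_n := \bigcup_{k\ge n} A_k$ for each $n \in \mathbb{N}$. Each $B_n$ lies in $\mathcal{F}$, being a countable union of measurable sets, and the sequence $\{B_n\}_{n\ge 1}$ is decreasing since $B_{n+1}\subset B_n$.

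The key step is to verify that $\bigcap_{n=1}^\infty B_n = \emptyset$, and this is the only point at which the disjointness hypothesis is used. Indeed, if some $\omega$ belonged to the intersection, then for every $n$ there would exist $k \ge n$ with $\omega \in A_k$; but disjointness forces $\omega$ to lie in at most one of the $A_k$, so the relation $\omega \in A_k$ can hold for at most one index $k$, which contradicts its holding for arbitrarily large $k$. Hence $B_n \downarrow \emptyset$.

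Finally, since $V$ is an upper probability it is by definition continuous at $\emptyset$, so $\lim_{n\to\infty} V(B_n) = 0$. Because $A_n \subset B_n$, monotonicity of the capacity $V$ gives $0 \le V(A_n) \le V(B_n)$, and squeezing yields $\lim_{n\to\infty} V(A_n) = 0$. I do not expect a genuine obstacle here: the content of the lemma is carried entirely by the continuity condition built into the definition of an upper probability, with the tail-union construction serving only to convert a disjoint family into a sequence decreasing to the empty set.
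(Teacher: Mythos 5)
Your proof is correct and follows exactly the paper's argument: form the tail unions $\bar{A}_k=\bigcup_{n\ge k}A_n$, use disjointness to see that they decrease to $\emptyset$, invoke the continuity at $\emptyset$ built into the definition of an upper probability, and conclude by monotonicity since $A_k\subset\bar{A}_k$. The only difference is that you spell out the (easy) verification that the tail unions have empty intersection, which the paper leaves implicit.
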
 
\begin{proof}
	Consider $\bar{A}_k=\cup_{n=k}^\infty A_n$ for each $k\in\mathbb{N}$. Then $\bar{A}_k\downarrow\emptyset$ as $k\to\infty$, which together with the continuity of $V$ implies that $\limsup_{n\to\infty}V(A_n)\le\lim_{k\to\infty}V(\bar{A}_k)=0$.
\end{proof}
As a corollary, we have the following.
\begin{lemma}\label{lem:finitely many probability}
	Let $(\O,\mathcal{F},V,T)$ be an UPPS. Then $\mathcal{ M}^e(T)\cap\operatorname{core}(V)$ has at most finitely many elements.
\end{lemma}
\begin{proof}
	By contradiction, assume that there exist infinitely many ergodic probabilities $\{Q_i\}_{i\in\mathbb{N}}\subset\operatorname{core}(V)$. Since $Q_i$ and $Q_j$ are singular to each other for any $i\neq j\in\mathbb{N}$, it follows that there exist $A_1,A_2\ldots\in\mathcal{F}$ such that for each $i\in\mathbb{N}$, $Q_i(A_i)=1$, and for each $i\neq j$, $A_i\cap A_j=\emptyset$,  and $Q_i(A_j)=0$. Thus, $V(A_i)=1$ for any $i\in\mathbb{N}$. This is a contradiction to Lemma \ref{lem: simple obser.}.
\end{proof}
\begin{rem}
	The set $\mathcal{ M}^e(T)\cap \operatorname{core}(V)$ may be empty. For example, $V$ is a probability which is invariant but not ergodic.
\end{rem}

Now we will study the structure of $\mathcal{ M}(T)\cap \operatorname{core}(V)$. 
It follows from $\mathcal{ M}^e(T)=\operatorname{ex}(\mathcal{ M}(T))$ (see \cite[Theorem 6.10]{Walters1982} for example)	that we have the following result.
\begin{lemma}\label{lem: extreme has no condition}
	Let $(\O,\mathcal{F},V,T)$ be an UPPS. Then 
	\[\operatorname{core}(V)\cap \mathcal{M}^e(T)\subset \operatorname{ex}(\operatorname{core}(V)\cap \mathcal{ M}(T)).\]
\end{lemma}

Now we prove the equivalence between the finite ergodic components and the condition that $V(\mathcal{I})=\{0,1\}$, and also provide characterizations of them from the convex analysis perspective. Let us begin with some lemmas.
\begin{lemma}\label{lem:control}
	Let $(\O,\mathcal{F},V,T)$ be an UPPS of finite ergodic components. Given any $\widetilde{P}\in\operatorname{core}(V)$, then for any  $Q\in\mathcal{ M}(T)$ with $Q\ll \widetilde{P}$, we have $Q\in\operatorname{core}(V)$.
\end{lemma}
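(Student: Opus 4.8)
The plan is to represent $Q$ as a convex combination of the finitely many ergodic measures in $\operatorname{core}(V)$ supplied by the hypothesis, and then to close by convexity of the core. Fix the finite ergodic components of $V$ relative to the invariant partition $\{A_1,\dots,A_n\}\subset\mathcal I$. Since each $V_i$ is an ergodic upper probability, Lemma \ref{lem:core of invariant lower probabilities} provides a unique invariant probability $Q_i\in\mathcal M(T)\cap\operatorname{core}(V_i)$, which is ergodic, satisfies $Q_i(A_i)=1$, and obeys $V_i\ll Q_i$; moreover $\operatorname{core}(V_i)\subset\operatorname{core}(V)$ because $V_i\le V$, so every $Q_i\in\operatorname{core}(V)$. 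My goal is to prove $Q=\sum_{i=1}^n Q(A_i)\,Q_i$.

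First I would localize $Q$ to the blocks. Put $\beta_i:=Q(A_i)$; as the $A_i\in\mathcal I$ partition $\Omega$, $\sum_i\beta_i=1$. For each $i$ with $\beta_i>0$ the conditional $Q^{(i)}(\cdot):=\beta_i^{-1}Q(\cdot\cap A_i)$ is again $T$-invariant, since $A_i$ is invariant. It then suffices to show $Q^{(i)}=Q_i$ for every such $i$: this gives $Q=\sum_i\beta_i Q^{(i)}=\sum_i\beta_i Q_i$, and since each $Q_i\in\operatorname{core}(V)$ and $\operatorname{core}(V)$ is convex, this combination lies in $\operatorname{core}(V)$, which is the conclusion. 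To identify $Q^{(i)}$ with $Q_i$ I would establish $Q^{(i)}\ll Q_i$ and then invoke the classical fact that a $T$-invariant probability absolutely continuous with respect to an ergodic probability must coincide with it. That fact follows by applying Birkhoff's theorem to $Q_i$: the averages $\frac1N\sum_{k=0}^{N-1}1_A\circ T^k$ converge to $Q_i(A)$ both $Q_i$- and, by absolute continuity, $Q^{(i)}$-almost surely, so integrating against $Q^{(i)}$ and using its invariance yields $Q^{(i)}(A)=Q_i(A)$.

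The heart of the matter is therefore the absolute continuity $Q^{(i)}\ll Q_i$. Tracing it downward, $Q^{(i)}$ has the same null sets as $Q|_{A_i}$, which is $\ll\widetilde P|_{A_i}$ by the hypothesis $Q\ll\widetilde P$; since $\widetilde P\le V$, it is enough to prove the capacity statement that $V(A)=0$ whenever $A\subset A_i$ and $Q_i(A)=0$. Here I would call on Lemma \ref{lem:core of invariant lower probabilities}(ii): it suffices to check $\hat P(A)=0$ for the invariant skeleton $\hat P$ of every $P\in\operatorname{core}(V)$. For such an invariant $\hat P$ I would condition on $A_i$ exactly as above and use the ergodicity of the component $V_i$ together with the uniqueness of $Q_i$ in $\mathcal M(T)\cap\operatorname{core}(V_i)$ to identify the $A_i$-conditional of $\hat P$ with $Q_i$, whence $\hat P(A)=\hat P(A_i)\,Q_i(A)=0$.

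I expect the decisive and most delicate step to be this last conditioning: that the $A_i$-conditional of an invariant element of $\operatorname{core}(V)$ is again governed by $V_i$ (equivalently, is $\ll Q_i$). This is precisely where the full strength of ``each $V_i$ is ergodic'' must be used — it is strictly stronger than $V(\mathcal I)=\{0,1\}$, and a naive restriction of $V$ to $A_i$ does \emph{not} reproduce $V_i$, so the ergodic-uniqueness and absolute-continuity content of Lemma \ref{lem:core of invariant lower probabilities} is essential. Once this block-wise absolute continuity is secured, the identification $Q^{(i)}=Q_i$ and the final appeal to convexity of $\operatorname{core}(V)$ are routine.
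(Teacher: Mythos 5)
Your proposal is not the paper's argument, and the difference matters. The paper never decomposes $Q$: it first checks $Q(A)\le V(A)$ for every $A\in\mathcal{I}$ (this is where $Q\ll\widetilde P$ is used), then extends the functional $f\mapsto\int f\,dQ$ from the bounded $\mathcal{I}$-measurable functions to $B_b(\Omega,\mathcal{F})$ by Hahn--Banach domination under $I(f)=\sup_{P\in\operatorname{core}(V)}\int f\,dP$, represents the extension by a finitely additive $P^*$ which must lie in $\operatorname{core}(V)$, and concludes $Q=\hat{P^*}\in\operatorname{core}(V)$ because invariant probabilities agreeing on $\mathcal{I}$ coincide. Your route --- write $Q=\sum_iQ(A_i)Q_i$ and close by convexity --- hinges instead on the claim that the $A_i$-conditional of an arbitrary invariant element of $\operatorname{core}(V)$ lies in $\operatorname{core}(V_i)$ (equivalently, is $\ll Q_i$). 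You need exactly this to invoke the uniqueness part of Lemma \ref{lem:core of invariant lower probabilities}, you flag it as the decisive step, but you never prove it; it is not a consequence of ergodicity of $V_i$ in any routine way. In substance it is assertion (iii) of Theorem \ref{thm:1}, which the paper derives \emph{from} Lemma \ref{lem:control}, so assuming it here is circular.

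Worse, the missing claim is false under the hypotheses available to you, and the gap cannot be closed. Take $\Omega=X_1\sqcup X_2$, two copies of the full shift with $T$ the shift; let $Q_1,\mu$ be distinct non-atomic ergodic measures on $X_1$ (say Bernoulli $1/2$ and $1/3$), $Q_2$ ergodic on $X_2$, and $B\subset X_1$ the invariant set of $\mu$-generic points, so $\mu(B)=1$, $Q_1(B)=0$. Put $\widetilde P=a\mu+bQ_1+cQ_2$ with $a,b,c>0$, $a+b+c=1$, and $V=\max(\widetilde P,Q_1,Q_2)$, an invariant continuous upper probability. Testing the core constraint $P(B')+P(D)\le\max\{a\mu(B')+bQ_1(D),\,Q_1(D)\}$ (for $B'\subset B$, $D\subset X_1\setminus B$) with $B'$ a high level set of the density of $P|_B$ with respect to $\mu$ and $D$ running through a fine $Q_1$-partition of $X_1\setminus B$ shows that any $P\in\operatorname{core}(V)$ with $P(X_1)=1$ satisfies $P(B)\ge\frac{1-b}{a}\,t^*$ where $t^*$ is that essential supremum, while trivially $P(B)\le t^*$; since $1-b>a$ this forces $P(B)=0$ and hence $P=Q_1$. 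Likewise the only core element carried by $X_2$ is $Q_2$. So $V_1=Q_1$ and $V_2=Q_2$ are ergodic and $V$ \emph{is} of finite ergodic components, yet the invariant core element $\widetilde P$ has $X_1$-conditional $(a\mu+bQ_1)/(a+b)\neq Q_1$, which is not in $\operatorname{core}(V_1)$. In fact this example satisfies every hypothesis of Lemma \ref{lem:control} and violates its conclusion: $\mu\in\mathcal{M}(T)$, $\mu\ll\widetilde P\in\operatorname{core}(V)$, but $\mu(B)=1>a=V(B)$, so $\mu\notin\operatorname{core}(V)$. (It also shows your aside is backwards: finite ergodic components is \emph{weaker} than $V(\mathcal{I})=\{0,1\}$, not stronger, since here $V(B)=a\in(0,1)$.) For completeness, the paper's own proof stumbles at the corresponding spot --- its unexplained assertion that $V_i(A)=0$ for all $i$ forces $V(A)=0$ fails for $A=B$ above --- and both the lemma and the paper's Hahn--Banach proof become correct if the hypothesis ``finite ergodic components'' is replaced by $V(\mathcal{I})=\{0,1\}$; under that replacement your route would still be blocked at the unproved conditional claim.
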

\begin{proof}
	First, we prove that for any $A\in\mathcal{I}$, $Q(A)\le V(A)$.
	By contradiction, if there exists $A\in\mathcal{I}$ such that $Q(A)>V(A)\ge V_i(A)\in\{0,1\}$, then $V(A)=0$, which implies that $\widetilde{P}(A)=0$, as $\widetilde{P}\in\operatorname{core}(V)$. Since $Q\ll\widetilde{P}$, it follows that $Q(A)=0$, which is a contradiction.
	
	Now we prove $Q\in\operatorname{core}(V)$. Define a functional on $B(\O,\mathcal{F})$ by 
	\[I(f)=\sup_{P\in\operatorname{core}(V)}\int fdP.\]
	By the linearity of integral  with respect to $P\in\operatorname{core}(V)$, it is easy to check that $I$ is a sublinear functional. Let $J(f):=\int fdQ$ for any $f\in B(\O,\mathcal{I})$. Since $Q(A)\le V(A)$ for any $A\in\mathcal{I}$, it follows that 
	\[J(f)\le I(f)\text{ for any }f\in B(\O,\mathcal{I}).\]
	By Hahn-Banach dominated extension theorem, there exists a linear functional $J^*:B(\O,\mathcal{F})\to\mathbb{R}$ such that 
	\begin{equation}\label{eq:2}
		J^*(f)=J(f) \text{ for any }f\in B(\O,\mathcal{I})
	\end{equation}
	and 
	\begin{equation}\label{eq:1}
		J^*(f)\le I(f)\text{ for any }f\in B(\O,\mathcal{F}).
	\end{equation}
	By Riesz representation theorem, there exists $P^*\in\D(\O,\mathcal{F})$ such that 
	\[J^*(f)=\int f dP^*\text{ for any }f\in B(\O,\mathcal{F}).\]
	By \eqref{eq:1}, one has that for any $A\in\mathcal{F}$, $P^*(A)=J^*(1_A)\le I(1_A)=V(A)$. Therefore, $P^*\in\operatorname{core}(V)$. In particular, it must be a probability, by the continuity of $V$. By \eqref{eq:2}, we have that $P^*|_{\mathcal{I}}=Q|_{\mathcal{I}}$. Since $P^*\in\operatorname{core}(V)$, by Lemma \ref{lem:core of invariant lower probabilities}, there exists a unique $\hat{P^*}\in\operatorname{core}(V)\cap \mathcal{ M}(T)$ such that $P^*|_{\mathcal{I}}=\hat{P^*}|_{\mathcal{I}}$. Thus, $Q|_{\mathcal{I}}=\hat{P^*}|_{\mathcal{I}}$. As $Q,\hat{P^*}\in\mathcal{ M}(T)$, by Lemma 2.8 in \cite{FHLZ}, it follows that $Q=\hat{P^*}\in\mathcal{ M}(T)\cap\operatorname{core}(V)$. The proof is completed.
\end{proof}

\begin{lemma}\label{lem:minimal invariant set}
	Let $(\O,\mathcal{F},V,T)$ be an UPPS. If $V(\mathcal{I})=\{0,1\}$, then for any $A\in\mathcal{I}$ with $V(A)=1$, there exists a subset $A'\in\mathcal{I}$ of $A$ with $V(A')=1$ such that for any $B\in \mathcal{I}$ with $B\subset A'$ and $V(B)=1$, we have $V(A'\setminus B)=0$.
\end{lemma}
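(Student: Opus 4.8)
The plan is to first recast the conclusion in a form that makes the role of the hypothesis transparent. Note that $\mathcal{I}$ is a $\sigma$-algebra, so for $A',B\in\mathcal{I}$ with $B\subseteq A'$ we have $A'\setminus B\in\mathcal{I}$, whence $V(A'\setminus B)\in\{0,1\}$ by the standing assumption $V(\mathcal{I})=\{0,1\}$. Thus the requested property ``$V(A'\setminus B)=0$'' is precisely the statement that $A'$ admits no splitting $A'=B\sqcup(A'\setminus B)$ into two invariant pieces each of capacity one. Call such an $A'$ \emph{indecomposable}. It therefore suffices to produce an indecomposable $A'\in\mathcal{I}$ with $A'\subseteq A$ and $V(A')=1$.

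The engine of the construction will be \fullref{lem: simple obser.}: since $V(B_n)\to 0$ for any sequence of pairwise disjoint sets, there can be \emph{no infinite sequence} of pairwise disjoint invariant sets all of capacity one. With this in hand, I would argue by a refinement procedure. Start from the one-element family $\{A\}$, which consists of pairwise disjoint invariant subsets of $A$ of capacity one. As long as the current family contains a decomposable member $S$, i.e. an $S$ that splits as $S=B\sqcup(S\setminus B)$ with $B,\,S\setminus B\in\mathcal{I}$ and $V(B)=V(S\setminus B)=1$, replace $S$ by its two pieces. Every family arising this way again consists of finitely many pairwise disjoint invariant subsets of $A$ of capacity one, and each refinement step increases the cardinality by exactly one. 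If the process terminates, the terminal family has all members indecomposable, and any one of them is the desired $A'$.

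The one point that genuinely requires care, and which I expect to be the main obstacle, is showing that this process \emph{terminates} (equivalently, that the supremum of the cardinalities of pairwise disjoint capacity-one invariant families contained in $A$ is finite). I would organize the splittings into a rooted binary tree: the root is $A$, and whenever a set is split its two pieces become its children. If the procedure never terminated, this tree would have infinitely many nodes; being finitely branching, K\"onig's lemma would furnish an infinite branch $A=S_0\supsetneq S_1\supsetneq\cdots$, where each $S_j$ is split into $S_{j+1}$ and a sibling $T_{j+1}$ with $V(T_{j+1})=1$. The siblings $T_1,T_2,\ldots$ are pairwise disjoint invariant sets, each of capacity one, contradicting \fullref{lem: simple obser.}. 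Hence the process terminates and the indecomposable $A'$ exists. Everything else (closure of $\mathcal{I}$ under the Boolean operations used, and the reduction in the first paragraph) is routine; the delicate ingredient is precisely the finiteness/termination step, where the only available control is the sequential continuity of $V$ together with \fullref{lem: simple obser.}.
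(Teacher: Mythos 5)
Your proof is correct and follows essentially the same route as the paper: both arguments hinge on Lemma \ref{lem: simple obser.}, deriving a contradiction from an infinite sequence of pairwise disjoint invariant capacity-one sets produced by repeated splitting of decomposable sets. The only difference is organizational: the paper negates the conclusion (every invariant capacity-one subset of $A$ is decomposable) and builds the nested chain of splits directly, which makes your tree/K\"onig's-lemma extraction step a correct but unnecessary detour.
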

\begin{proof}
	By contradiction, we assume that for any  subset $A'\in\mathcal{I}$ of $A$ with $V(A')=1$ there exists a  nonempty $B\in \mathcal{I}$ with $B\subset A'$ and $V(B)=1$, $V(A'\setminus B)>0$.  Hence $V(A'\setminus B)=1$, as $V(\mathcal{I})=\{0,1\}$. Denote this $A'\setminus B$ by $A_1$. Repeating this argument with $A'=B$, we obtain another set $B$ with $V(B)=1$ and $V(A'\setminus B)=1$. Denote this new $A'\setminus B$ by $A_2$. The process can continue forever. Thus, we can obtain a sequence of disjoint subsets $\{A_n\}_{n=1}^\infty$ with $V(A_n)=1$ for each $n\in\mathbb{N}$. This contradicts to Lemma \ref{lem: simple obser.}.
\end{proof}

Now we are able to provide some characterizations of finite ergodic components.
\begin{theorem}\label{thm:1}
	Let $(\O,\mathcal{F},V,T)$ be an UPPS.  Then the following four statements are equivalent:
	\begin{enumerate}
		\item [(i)] $V(\mathcal{I})=\{0,1\}$; 
		\item [(ii)] $V$ is of finite ergodic components;
		\item [(iii)] the set $\mathcal{ M}^e(T)\cap\operatorname{core}(V)=\{Q_1,\ldots,Q_n\}$, and  for any $P\in\mathcal{M}(T)\cap \operatorname{core}(V)$  there exist $0\le \a_1,\ldots,\a_{n}\le 1$ with $\sum_{i=1}^{n}\a_i=1$  such that 
		\[P=\a_1Q_1+\ldots+\a_nQ_n,\] where the decomposition is unique;
		\item[(iv)] $\operatorname{ex}(\operatorname{core}(V)\cap \mathcal{M}(T))=\operatorname{core}(V)\cap \mathcal{M}^e(T)\neq \emptyset$.
	\end{enumerate}
\end{theorem}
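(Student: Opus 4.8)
The plan is to establish the cycle $(\mathrm{i})\Rightarrow(\mathrm{ii})\Rightarrow(\mathrm{iii})\Rightarrow(\mathrm{i})$ together with the equivalence $(\mathrm{iii})\Leftrightarrow(\mathrm{iv})$. Throughout I write $K:=\operatorname{core}(V)\cap\mathcal{M}(T)$, which is convex and weak* compact by Lemma~\ref{lem:compactness of m}; I will use freely that each $P\mapsto P(A)$ is weak* continuous, so that suprema of such functionals over $\operatorname{core}(V)$ or $K$ are attained. For $(\mathrm{i})\Rightarrow(\mathrm{ii})$ I would build the partition from minimal invariant sets. Starting from $\Omega$ (with $V(\Omega)=1$), Lemma~\ref{lem:minimal invariant set} yields a minimal $A_1'\in\mathcal{I}$ of capacity $1$; extracting the next minimal set inside $\Omega\setminus A_1'$ whenever that complement still has capacity $1$, and iterating, produces pairwise disjoint minimal sets $A_1',\dots,A_m'\in\mathcal{I}$ of capacity $1$. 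The process terminates after finitely many steps, since an infinite disjoint family of capacity-$1$ sets would contradict Lemma~\ref{lem: simple obser.}; the leftover $R=\Omega\setminus\bigcup_i A_i'$ then has $V(R)=0$. Setting $A_1:=A_1'\cup R$ and $A_i:=A_i'$ for $i\ge2$ gives a partition $\{A_1,\dots,A_m\}\subset\mathcal{I}$ of $\Omega$. To verify each $V_i$ is ergodic I take $B\in\mathcal{I}$ and observe that any $P\in\operatorname{core}(V)$ with $P(A_i)=1$ has $P(R)=0$ and hence concentrates on $A_i'$; minimality of $A_i'$ forces either $V(B\cap A_i')=0$ (so $V_i(B)=0$) or $V(A_i'\setminus B)=0$ (so $V_i(A_i\setminus B)=0$), which is exactly ergodicity of $V_i$ on $A_i$. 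Weak* compactness guarantees such a $P$ exists, so each $V_i$ is a genuine upper probability.

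For $(\mathrm{ii})\Rightarrow(\mathrm{iii})$ I would first produce the ergodic measures. Each ergodic $V_i$ has, by the ergodic part of Lemma~\ref{lem:core of invariant lower probabilities}, a unique invariant probability $Q_i\in\operatorname{core}(V_i)$, which is ergodic and satisfies $Q_i(A_i)=1$; since $V_i\le V$ we get $Q_i\in\operatorname{core}(V)$. These $Q_i$ are distinct (concentrated on the disjoint $A_i$) and exhaust $\operatorname{core}(V)\cap\mathcal{M}^e(T)$: any ergodic $Q$ in the core has $Q(A_i)\in\{0,1\}$, hence $Q(A_{i_0})=1$ for a unique $i_0$, which makes $Q$ a competitor in the supremum defining $V_{i_0}$, so $Q\le V_{i_0}$ and thus $Q=Q_{i_0}$. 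Finally, for $P\in K$ put $\alpha_i:=P(A_i)$ and, when $\alpha_i>0$, let $P_i(\cdot):=P(\cdot\cap A_i)/\alpha_i$. Each $P_i$ is invariant (because $A_i\in\mathcal{I}$) and absolutely continuous with respect to $P\in\operatorname{core}(V)$, so Lemma~\ref{lem:control} places $P_i\in\operatorname{core}(V)$; as $P_i(A_i)=1$ this gives $P_i\le V_i$, whence $P_i=Q_i$. Therefore $P=\sum_i\alpha_iQ_i$, and uniqueness follows from mutual singularity of the $Q_i$ (testing against the sets $A_i$ recovers the coefficients).

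For $(\mathrm{iii})\Rightarrow(\mathrm{i})$, fix $A\in\mathcal{I}$. Since the invariant skeleton of Lemma~\ref{lem:core of invariant lower probabilities} agrees with $P$ on $\mathcal{I}$ and lies in $K$, one has $V(A)=\sup_{Q\in K}Q(A)$. Writing $Q=\sum_i\alpha_iQ_i$ and using $Q_i(A)\in\{0,1\}$, we obtain $Q(A)=\sum_{i:\,Q_i(A)=1}\alpha_i$, whose supremum over the simplex is $1$ if some $Q_i(A)=1$ and $0$ otherwise, so $V(A)\in\{0,1\}$. The equivalence $(\mathrm{iii})\Leftrightarrow(\mathrm{iv})$ is then convex-analytic: Lemma~\ref{lem:finitely many probability} makes $\operatorname{core}(V)\cap\mathcal{M}^e(T)$ finite, Lemma~\ref{lem: extreme has no condition} gives $\operatorname{core}(V)\cap\mathcal{M}^e(T)\subset\operatorname{ex}(K)$, and under (iii) the identity $K=\operatorname{conv}\{Q_1,\dots,Q_n\}$ (a closed polytope) together with singularity shows the $Q_i$ are precisely the extreme points, yielding (iv); conversely, from (iv) the Krein--Milman theorem (Lemma~\ref{lem:Krein–Milman Theorem}) gives $K=\operatorname{conv}(\operatorname{ex}K)$, and singularity again forces uniqueness of the representation, which is (iii).

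I expect the main obstacle to be the conditioning step in $(\mathrm{ii})\Rightarrow(\mathrm{iii})$: a priori there is no reason the conditional measure $P_i=P(\,\cdot\cap A_i)/P(A_i)$ should belong to $\operatorname{core}(V)$, and it is exactly here that the finite-ergodic-components hypothesis is indispensable, entering through Lemma~\ref{lem:control}. A secondary point needing care is the bookkeeping in $(\mathrm{i})\Rightarrow(\mathrm{ii})$: checking that absorbing the capacity-$0$ remainder $R$ into $A_1$ leaves the ergodicity of $V_1$ intact, and that each component $A_i$ genuinely supports a probability with $P(A_i)=1$, so that every $V_i$ is a bona fide upper probability rather than the identically-zero set function.
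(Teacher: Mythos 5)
Your proposal is correct, and its skeleton coincides with the paper's: (i)$\Rightarrow$(ii) is proved the same way (minimal invariant sets from Lemma~\ref{lem:minimal invariant set}, termination via Lemma~\ref{lem: simple obser.}, capacity-zero remainder absorbed into one cell), and the convex-analytic side rests on the same Lemmas~\ref{lem:finitely many probability} and~\ref{lem: extreme has no condition} together with Krein--Milman. You route two steps differently, however, and in both cases your version is arguably the cleaner one. In (ii)$\Rightarrow$(iii) the paper first proves $P_i\ll Q_i$ (via $V_i\ll Q_i$ from Lemma~\ref{lem:core of invariant lower probabilities}) and then applies Lemma~\ref{lem:control} to the component system $V_i$; its intermediate step ``$V_i(A)=0$, thus $P(A\cap A_i)=0$'' implicitly requires the conditional measure $P_i$ to be dominated by $V_i$, which is the very point being established. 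You avoid this circularity by applying Lemma~\ref{lem:control} to the full system with $\widetilde P=P$ and the trivial relation $P_i\ll P$, then using $P_i(A_i)=1$ to place $P_i\in\operatorname{core}(V_i)$ and concluding $P_i=Q_i$ from ergodicity of $V_i$; similarly, your exhaustion argument (any ergodic $Q$ in the core charges exactly one $A_{i_0}$, hence lies in $\operatorname{core}(V_{i_0})$ and equals $Q_{i_0}$) is tighter than the paper's appeal to mutual singularity producing an invariant set disjoint from every $A_i$. Second, you close the equivalence as (i)$\Rightarrow$(ii)$\Rightarrow$(iii)$\Rightarrow$(i) plus (iii)$\Leftrightarrow$(iv), getting (iii)$\Rightarrow$(i) directly from the invariant-skeleton identity $V(A)=\sup\{Q(A):Q\in\operatorname{core}(V)\cap\mathcal{M}(T)\}$ for $A\in\mathcal{I}$, whereas the paper runs (iii)$\Rightarrow$(iv)$\Rightarrow$(i) through Krein--Milman; both cover all four statements, yours trading one use of Krein--Milman for the (easy) extra direction of (iii)$\Leftrightarrow$(iv). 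A further small merit: you observe that weak* compactness of $\operatorname{core}(V)$ makes each family $\{P\in\operatorname{core}(V):P(A_i)=1\}$ nonempty, so each $V_i$ is a genuine upper probability --- a point the paper leaves implicit.
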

\begin{proof}
	(i) $\Rightarrow$ (ii).  Firstly, we use  Lemma \ref{lem:finitely many probability} and Lemma \ref{lem:minimal invariant set} to prove that there exist $n\in\mathbb{N}$ and $\{A_1,\ldots, A_n\}\subset \mathcal{I}$ satisfying  the following two properties:
	\begin{enumerate}
		\item [(1)]	$\O=\cup_{i=1}^nA_i, A_i\cap A_j=\emptyset, \text{ }i\neq j,\text{ and }V(A_i)=1,\text{ }i,j\in\{1,2,\ldots,n\};$
		\item [(2)] for any $i\in\{1,2,\ldots,n\}$ and $B\in\mathcal{I}$ with $B\subset A_i$ if $V(B)=1$, then $V(A_i\setminus B)=0$.
	\end{enumerate}
	Indeed, as $\O\in\mathcal{I}$ and $V(\O)=1$, by Lemma \ref{lem:minimal invariant set},  there exists $\tilde{A}_1\in\mathcal{I}$ with $V(\tilde{A}_1)=1$ and for any $B\in\mathcal{I}$ with $V(B)=1$ and $B\subset \tilde{A}_1$, then $V(\tilde{A}_1\setminus B)=0$. If $V(\O\setminus\tilde{A}_1)=0$,   we let $A_1=\O$. Then (1) and (2) above hold for $n=1$, and hence the construction is completed. Otherwise, we let $A_1=\tilde{A}_1$, and use the same argument on $\O\setminus A_1$.   Lemma \ref{lem: simple obser.} guarantees this process terminates at a finite number of steps, and we obtain a constant $n\in\mathbb{N}$ and a collection of sets $\{A_1,\ldots, A_n\}\subset \mathcal{I}$ satisfying (1) and (2).

	For each $i\in\{1,2,\ldots,n\}$, let
	$V_i$ be defined as in \eqref{eq:maindeifnition}.
	Now we prove $V_i$ is ergodic. Indeed, for any $A\in\mathcal{I}$, we have $A\cap A_i\in\mathcal{I}$. If $V(A\cap A_i)=0$, then $V_i(A)=V_i(A\cap A_i)\le 		V(A\cap A_i)=0$. If $V(A\cap A_i)=1$,  by (2), we have \[V(A_i\setminus(A\cap A_i))=0,\]
	which implies that $P(A^c)=0$, for any $P\in\operatorname{core}(V)$ with $P(A_i)=1$, and hence $V_i(A^c)=0$. Now we have proven that for any $A\in\mathcal{I}$, either $V_i(A)=0$ or $V_i(A^c)=0$. Thus, $V_i$ is ergodic.
	
	(ii) $\Rightarrow$ (iii). Since $V$ is of finite ergodic components,  there exist $n\in\mathbb{N}$, and $A_1,\ldots,A_n\in\mathcal{F}$ with  $\O=\cup_{i=1}^nA_i$, $T^{-1}A_i=A_i$ for each $i$ and $A_i\cap A_j=\emptyset$, when $i\neq j$, such that for each $i\in\{1,2,\ldots,n\}$, $V_i$ as defined in \eqref{eq:maindeifnition}  is an ergodic upper probability.
	For each $i\in\{1,2,\ldots,n\}$, since $V_i$ is ergodic, it follows from Lemma \ref{lem:core of invariant lower probabilities} that there exists a unique  $Q_i\in\operatorname{core}(V_i)\cap\mathcal{ M}^e(T|_{A_i})\subset\operatorname{core}(V)$. Now we prove $\mathcal{ M}^e(T)\cap\operatorname{core}(V)=\{Q_1,\ldots,Q_n\}$. It is obvious that $\{Q_1,\ldots,Q_n\}\subset\mathcal{ M}^e(T)\cap\operatorname{core}(V)$. Conversely, as any two different ergodic probabilities are singular,  if  there exists another ergodic probability $Q$ in $\operatorname{core}(V)$ then there exists $A\in\mathcal{I}$ with $Q(A)=1$ such that $A\cap A_i=\emptyset$ for all $i\in\{1,2,\ldots,n\}$. This is a contradiction with the assumption that $\O =\cup_{i=1}^nA_i$. Thus, 
	\[\mathcal{ M}^e(T)\cap\operatorname{core}(V)=\{Q_1,\ldots,Q_n\}.\]
	Given any $P\in\mathcal{M}(T)\cap \operatorname{core}(V)$, define
	\[P_i(A)=\frac{P(A\cap A_i)}{P(A_i)},\text{ for all }i\in\{1,2,\ldots,n\} \text{ such that }P(A_i)>0.\]
	Now we prove for any  fixed $i\in\{1,2,\ldots,n\}$,  $P_i\ll Q_i$. Indeed, for any $A\in\mathcal{I}$ with $Q_i(A)=0$. Since $V_i$ is ergodic and $Q_i\in \mathrm{core}(V_i)\cap \mathcal{M}^e(T)$, it follows from Lemma \ref{lem:core of invariant lower probabilities} that $V_i\ll Q_i$. Hence, one has 
	$V_i(A)=0$. Thus,  $P(A\cap A_i)=0$, which shows that $P_i(A)=0$ and hence $P_i\ll Q_i$. By Lemma \ref{lem:control}, we have that $P_i\in\operatorname{core}(V_i)\cap \mathcal{ M}(T)$, which together with the fact that $V_i$ is ergodic, implies that $Q_i=P_i$. Thus, 
	\[P=P(A_1)Q_1+\ldots+P(A_n)Q_n.\]
	The uniqueness of this decomposition is obvious.

	(iii) $\Rightarrow$ (iv).  This is a direct corollary of Lemma \ref{lem:Krein–Milman Theorem} and  Lemma \ref{lem: extreme has no condition}.
	
	(iv) $\Rightarrow$ (i). By Lemma \ref{lem:finitely many probability}, $\operatorname{core}(V)\cap \mathcal{M}^e(T)$ has at most finitely many elements, and we denote
	\[\operatorname{core}(V)\cap \mathcal{M}^e(T)=\{Q_1,\ldots,Q_n\}.\]
	By the assumption $\operatorname{ex}(\operatorname{core}(V)\cap \mathcal{M}(T))=\operatorname{core}(V)\cap \mathcal{M}^e(T)$, we know that for any $P\in \operatorname{core}(V)\cap \mathcal{ M}(T)$, there exists  $0\le \a_1,\ldots,\a_{n}\le 1$ with $\sum_{i=1}^{n}\a_i=1$ such that 
	\begin{equation}\label{eq:1234}
		P=\sum_{i=1}^n\a_iQ_i.
	\end{equation}
	Thus, for $A\in\mathcal{I}$, if there exists $i\in\{1,2,\ldots,n\}$ such that $Q_i(A)=1$, then $V(A)=1$. Otherwise, for any $P\in \operatorname{core}(V)\cap \mathcal{ M}(T)$, from \eqref{eq:1234}, one has  $P(A)=0$, which together with Lemma \ref{lem:core of invariant lower probabilities}, implies that $V(A)=0$. Thus, $V(\mathcal{I})=\{0,1\}$.
\end{proof}
\begin{rem}
	(i) $\Rightarrow$ (iii) was proved by a different method in \cite{sheng2023continuous}.
\end{rem}
As a corollary, we provide a type of Birkhoff's ergodic theorem for UPPSs with finite ergodic components.
\begin{theorem}\label{thm:Birkhoff's ergodic}
	Let $(\O,\mathcal{F},V,T)$ be	an UPPS. Then the following three statements are equivalent:
	\begin{enumerate}
		\item [(i)]  $V$ is of finite ergodic components;
		\medskip
		\item [(ii)]there exist  $Q_1,\ldots,Q_n\in\mathcal{ M}^e(T)\cap\operatorname{core}(V)$ and $A_1,\ldots,A_n\in\mathcal{I}$ satisfying that  $\O=\cup_{j=1}^nA_j$, $Q_j(A_j)=1$, and $A_j\cap A_k=\emptyset$ when $j\neq k$ such that for any $f\in B(\O,\mathcal{F})$,  
		\begin{equation}\label{eq:4.81}
			\lim_{N\to\infty}\frac{1}{N}\sum_{i=0}^{N-1}f(T^i\o)=\sum_{j=1}^{n}\left(\int fdQ_j\right) \cdot1_{A_j}(\o),\text{ for }V\text{-a.s. }\o\in\O.
		\end{equation}
		\medskip
		\item [(iii)]there exist  $Q_1,\ldots,Q_n\in\mathcal{ M}(T)\cap\operatorname{core}(V)$ and $A_1,\ldots,A_n\in\mathcal{I}$ satisfying that  $\O=\cup_{j=1}^nA_j$, $Q_j(A_j)=1$, and $A_j\cap A_k=\emptyset$ when $j\neq k$ such that for any $f\in B(\O,\mathcal{F})$,  
		\begin{equation}\label{eq:4.8}
			\lim_{N\to\infty}\frac{1}{N}\sum_{i=0}^{N-1}f(T^i\o)=\sum_{j=1}^{n}\left(\int fdQ_j\right) \cdot1_{A_j}(\o),\text{ for }V\text{-a.s. }\o\in\O.
		\end{equation}
	\end{enumerate}  
\end{theorem}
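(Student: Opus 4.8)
The plan is to establish the cycle (i) $\Rightarrow$ (ii) $\Rightarrow$ (iii) $\Rightarrow$ (i). The implication (ii) $\Rightarrow$ (iii) is immediate, since $\mathcal{M}^e(T)\subset\mathcal{M}(T)$, so the same data witnessing (ii) witnesses (iii). Thus the two genuine tasks are: producing the pointwise limit \eqref{eq:4.81} in the finite-ergodic-components regime, and conversely reading off the finite-ergodic-components property from the mere existence of such a limit.

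For (i) $\Rightarrow$ (ii), I would first invoke Theorem \ref{thm:1}: finite ergodic components gives a partition $\{A_1,\dots,A_n\}\subset\mathcal{I}$ with $V(A_i)=1$ and each $V_i$ ergodic, together with $\mathcal{M}^e(T)\cap\operatorname{core}(V)=\{Q_1,\dots,Q_n\}$, where by Lemma \ref{lem:core of invariant lower probabilities} each $Q_i$ is the unique ergodic element of $\operatorname{core}(V_i)\subset\operatorname{core}(V)$, satisfies $Q_i(A_i)=1$, and $V_i\ll Q_i$. Fix $f\in B(\O,\mathcal{F})$. The classical Birkhoff ergodic theorem applied to the $\sigma$-additive ergodic measure $Q_i$ gives $\frac1N\sum_{k=0}^{N-1}f\circ T^k\to\int f\,dQ_i$ for $Q_i$-a.s.\ $\o$. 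Since the limit function $\bar f$ is $T$-invariant wherever it exists, the set $E_i:=\{\bar f=\int f\,dQ_i\}$ lies in $\mathcal{I}$ and $Q_i(E_i)=1$. On $A_i$ the target $\sum_j\bigl(\int f\,dQ_j\bigr)1_{A_j}$ equals $\int f\,dQ_i$, so the exceptional set is exactly $\bigcup_{i=1}^n\bigl(A_i\cap E_i^c\bigr)\in\mathcal{I}$.

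The heart of the argument, and the step I expect to be the main obstacle, is upgrading the $n$ separate $Q_i$-a.s.\ statements into a single $V$-a.s.\ statement, i.e.\ showing $V(A_i\cap E_i^c)=0$ for each $i$. For this I would use Lemma \ref{lem:core of invariant lower probabilities}(ii): it suffices to check that the invariant skeleton $\hat P$ vanishes on $A_i\cap E_i^c$ for every $P\in\operatorname{core}(V)$. By Theorem \ref{thm:1}(iii) one may write $\hat P=\sum_j\alpha_j Q_j$; then $Q_j(A_i\cap E_i^c)=0$ for $j\neq i$ (because $Q_j(A_i)=0$) and for $j=i$ (because $Q_i(E_i^c)=0$), so $\hat P(A_i\cap E_i^c)=0$. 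Lemma \ref{lem:core of invariant lower probabilities}(ii) then forces $V(A_i\cap E_i^c)=0$, and finite subadditivity of $V$ yields $V\bigl(\bigcup_i A_i\cap E_i^c\bigr)=0$, which is precisely \eqref{eq:4.81}.

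For (iii) $\Rightarrow$ (i), by Theorem \ref{thm:1} it is enough to prove $V(\mathcal{I})=\{0,1\}$. Given $B\in\mathcal{I}$, I would take $f=1_B$; since $T^{-k}B=B$, the Birkhoff averages are constantly $1_B(\o)$, so (iii) forces $1_B=\sum_j Q_j(B)1_{A_j}$ to hold $V$-a.s., say off a set $D^c$ with $V(D^c)=0$. Because $1_B$ takes only the values $0$ and $1$, any index $j$ with $0<Q_j(B)<1$ must satisfy $A_j\subset D^c$. Splitting the indices by the value of $Q_j(B)$, one sees that $B$ and $C:=\bigcup_{j:\,Q_j(B)=1}A_j$ agree on $D$, so $V(B\triangle C)=0$ and hence $V(B)=V(C)$ by monotonicity and subadditivity. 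If $C=\emptyset$ then $V(B)=0$; otherwise $C\supset A_j$ for some $j$ with $Q_j(B)=1$, and since $Q_j\in\operatorname{core}(V)$ with $Q_j(A_j)=1$ we get $V(A_j)=1$, whence $V(B)=V(C)=1$. Thus $V(\mathcal{I})=\{0,1\}$, closing the cycle.
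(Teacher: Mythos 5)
Your proof is correct, and its overall skeleton --- the cycle (i) $\Rightarrow$ (ii) $\Rightarrow$ (iii) $\Rightarrow$ (i), resting on Theorem \ref{thm:1} and on testing with $f=1_B$ for invariant $B$ in the converse direction --- matches the paper's. There are two genuine points of divergence. First, in (i) $\Rightarrow$ (ii) the paper does not invoke the classical Birkhoff theorem directly: it applies the nonlinear law of large numbers of \cite{FHLZ} (Theorem 3.3 there) to each ergodic component $(A_j,\mathcal{F}_j,V_j,T)$, which already yields convergence off a $V_j$-null set, and then glues these via Lemma \ref{lem:core of invariant lower probabilities}(ii). You instead apply the classical ergodic theorem to each $Q_j$ and carry out the upgrade from $Q_j$-a.s.\ to $V$-a.s.\ by hand, writing every invariant skeleton as $\hat P=\sum_j\alpha_jQ_j$ via Theorem \ref{thm:1}(iii) before invoking Lemma \ref{lem:core of invariant lower probabilities}(ii); this is more self-contained relative to the present paper (no appeal to the external nonlinear LLN), and it makes explicit a step the paper leaves implicit, since the paper's passage from ``$Q_j(A'^c)=0$ for all $j$'' to ``$V(A'^c)=0$'' also tacitly uses that every skeleton is a convex combination of the $Q_j$. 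Second, in (iii) $\Rightarrow$ (i) the paper verifies the definition of finite ergodic components directly: from $1_A=\sum_jQ_j(A)1_{A_j}$ $V$-a.s.\ it deduces that each $Q_j$ is ergodic and then that each $V_j$ is ergodic (if $Q_j(A)=0$ then $V(A\cap A_j)=0$, hence $V_j(A)=0$), whereas you extract $V(\mathcal{I})=\{0,1\}$ from the same identity and then invoke the equivalence (i) $\Leftrightarrow$ (ii) of Theorem \ref{thm:1}. Both routes rest on the same computation; yours delegates the remaining work to Theorem \ref{thm:1}, the paper's redoes it component by component.
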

\begin{proof}
	(i) $\Rightarrow$ (ii).	Let $Q_j$, $j=1,2,\ldots,n$ be as in Theorem \ref{thm:1}. Applying the law of large numbers \cite[Theorem 3.3]{FHLZ} for the ergodic UPPS $(A_j,\mathcal{F}_j,V_j,T)$, there exists  $A_j'\subset A_j$ with $V_j(A_j\setminus A_j')=0$.
	\[\lim_{N\to\infty}\frac{1}{N}\sum_{i=0}^{N-1}f(T^i\o)=\int fdQ_j ,\text{ for any }\o\in A_j'.\]
	Let $A'=\cup_{j=1}^n A'_j$. Then for each  $j\in\{1,2,\ldots,n\}$, $Q_j(A'^c)=0$, which together with Lemma \ref{lem:core of invariant lower probabilities}, implies that $V(A'^c)=0$. Also it is obvious that $1_{A_j}=1_{A_j'}$, $V$-a.s. So \eqref{eq:4.81} holds and the proof is completed.
	
	(ii) $\Rightarrow$ (iii) is trivial.
	
	(iii) $\Rightarrow$ (i).	If \eqref{eq:4.8} holds, we show  $Q_j$ is ergodic, for each $j=1,2,\ldots,n$. This can be seen from the following. Given any $A\in\mathcal{I}$, considering $f=1_A$ in \eqref{eq:4.8}, we have that 
	\begin{equation}\label{10.43}
		1_A=\sum_{j=1}^nQ_j(A)1_{A_j},\text{ }V\text{-a.s.}
	\end{equation}
	This implies that for any $j\in\{1,2,\ldots,n\}$, $Q_j(A)\in\{0,1\}$. Thus, $Q_j$ is ergodic for each $j\in\{1,2,\ldots,n\}$.
	
	Now we prove that for any fixed $j\in\{1,2,\ldots,n\}$, $V_j$ is ergodic. We only need to prove, for any $A\in\mathcal{I}$, if $Q_j(A)=0$, then $V_j(A)=0$. Indeed, given any such $A$, by \eqref{10.43}, we have that 
	\[V(\{\o\in\O:1_A(\o)=0\}^c\cap A_j)=0,\]
	which implies that $V_j(A)\le V(A\cap A_j)=0$. Thus, $V_j$ is ergodic for each $j\in\{1,2,\ldots,n\}$, and then $V$ is of finite ergodic components.
\end{proof}
\begin{rem}
	In \cite{CMM2016} and \cite{sheng2023continuous}, the authors proved if $V(\mathcal{I})=\{0,1\}$, then
	\[\inf_{P\in\operatorname{core}(V)}\int fdP\le \lim_{N\to\infty}\frac{1}{N}\sum_{i=0}^{N-1}f(T^i\o)\le \sup_{P\in\operatorname{core}(V)}\int fdP,\text{ for }V\text{-a.s. }\o\in\O.\]
	However, our result provides  concrete values for this Birkhoff's limit, which are multiple values, in contrast to the result in the ergodic regime \cite{FHLZ} and Birkhoff's ergodic theorem in the classical probability case. Moreover, our result also gives the equivalence of $V(\mathcal{I})=\{0,1\}$, and the law of large numbers. This is impossible without knowing the precise (multi-) values of the Birkhoff's limit.
\end{rem}

Finally, we provide a decomposition for all invariant upper probabilities without assuming the UPPS being of finite ergodic components. Let us begin with a lemma, which was proved in the proof of \cite[Theorem 6.10 (iv)]{Walters1982}.
\begin{lemma}\label{lem:Leb deco}
	Let $(\O,\mathcal{F})$ be a measurable space, and $T:\O\to\O$ be a measurable invertible transformation. For any $P,Q\in\mathcal{ M}(T)$, there exist  $k,l>0$ with $k+l=1$ and  $P_a,P_s\in \mathcal{ M}(T)$  such that
	\[P=kP_a+lP_s, \\ P_a\ll Q\text{ and }P_s\perp Q.\]
\end{lemma}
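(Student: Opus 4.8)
The plan is to reduce to the classical Lebesgue decomposition theorem and then upgrade it to the invariant setting by exploiting the uniqueness of that decomposition. First I would apply the Lebesgue decomposition of $P$ with respect to $Q$ (both are $\sigma$-additive probabilities, as they lie in $\D^\sigma(\O,\mathcal{F})$), obtaining finite measures $\mu_a,\mu_s$ on $(\O,\mathcal{F})$ with
\[
P=\mu_a+\mu_s,\qquad \mu_a\ll Q,\qquad \mu_s\perp Q,
\]
where this decomposition is \emph{unique}. The whole remaining task is to show that $\mu_a$ and $\mu_s$ are themselves $T$-invariant, after which a normalization finishes the proof.

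For the invariance step I would introduce the pushforward $T_*\nu(A):=\nu(T^{-1}A)$ of a measure $\nu$; invariance of $P$ and $Q$ then reads $T_*P=P$ and $T_*Q=Q$. Applying $T_*$ to the decomposition gives $P=T_*\mu_a+T_*\mu_s$, and I would verify that this is again a Lebesgue decomposition of $P$ relative to $Q$, so that uniqueness forces $T_*\mu_a=\mu_a$ and $T_*\mu_s=\mu_s$. The absolutely continuous relation is immediate: if $Q(A)=0$, then $Q(T^{-1}A)=Q(A)=0$ by invariance, hence $\mu_a(T^{-1}A)=0$, i.e. $T_*\mu_a(A)=0$; thus $T_*\mu_a\ll Q$. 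For the singular relation I would pick a witnessing set $N$ with $\mu_s(N^c)=0$ and $Q(N)=0$, and use invertibility of $T$ to set $M:=TN$, so that $T^{-1}M=N$; then $T_*\mu_s(M^c)=\mu_s(T^{-1}(M^c))=\mu_s((T^{-1}M)^c)=\mu_s(N^c)=0$, while $Q(M)=Q(TN)=Q(T^{-1}(TN))=Q(N)=0$ by invariance, giving $T_*\mu_s\perp Q$.

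Finally I would set $k:=\mu_a(\O)$ and $l:=\mu_s(\O)$, so that $k+l=P(\O)=1$, and put $P_a:=\mu_a/k$, $P_s:=\mu_s/l$; these are invariant probabilities satisfying $P_a\ll Q$, $P_s\perp Q$ and $P=kP_a+lP_s$. I expect the main obstacle to be the singular step: checking that $\perp$ is preserved under $T_*$ is exactly where invertibility of $T$ (to transport the null set $N$ to $M=TN$) together with invariance of $Q$ are essential, and one must handle with care the measurability of $M$ and the set identities $T^{-1}(M^c)=(T^{-1}M)^c$ and $T^{-1}TN=N$. A secondary point is the strict positivity $k,l>0$, which holds precisely when $P$ is neither absolutely continuous nor singular with respect to $Q$; in the two degenerate boundary cases one coefficient vanishes and the corresponding term drops out, so the statement is to be read accordingly.
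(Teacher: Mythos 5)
Your proposal is correct and is essentially the paper's own proof: the paper simply cites the proof of Theorem 6.10(iv) in Walters' book, which is exactly this argument --- take the classical Lebesgue decomposition of $P$ with respect to $Q$, check that the pushforward under $T$ preserves absolute continuity and singularity (using invariance of $Q$ and invertibility of $T$ to transport the witnessing null set), and invoke uniqueness of the decomposition to conclude that $P_a$ and $P_s$ are themselves invariant. Your closing caveat is also apt: the strict positivity $k,l>0$ in the statement can only be read as excluding (or trivially absorbing) the degenerate cases $P\ll Q$ and $P\perp Q$, which is how the lemma is in fact used in Theorem \ref{thm:decompostion for all invariant upps}.
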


	
The following result provides a decomposition of any invariant probability within the core of any invariant upper probability, without assuming finitely many ergodic components.
\begin{theorem}\label{thm:decompostion for all invariant upps}
	Let $(\O,\mathcal{F},V,T)$ be an UPPS, where $T$ is invertible.   Suppose that  $\mathcal{ M}^e(T)\cap\operatorname{core}(V)=\{Q_1,\ldots,Q_n\}$. Then for any $P\in\mathcal{ M}(T)\cap\operatorname{core}(V)$, there exist $0\le \a_1,\ldots,\a_{n+1}\le 1$ with $\sum_{i=1}^{n+1}\a_i=1$ and $Q_{n+1}\in \mathcal{ M}(T)\cap\operatorname{core}(V)$, which is singular with respect to all $Q_i$, $i=1,2,\ldots,n$ such that 
	\[P=\a_1Q_1+\ldots+\a_nQ_n+\a_{n+1}Q_{n+1}.\]
	Moreover, the decomposition is unique.
\end{theorem}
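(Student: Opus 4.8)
The plan is to build the decomposition by peeling off the ergodic core measures $Q_1,\dots,Q_n$ one at a time via the invariance-respecting Lebesgue decomposition of Lemma~\ref{lem:Leb deco}, and then to identify the leftover mass with the desired singular remainder $Q_{n+1}$. Set $R_0=P$. Having produced $R_{i-1}\in\mathcal{M}(T)\cap\operatorname{core}(V)$, I would apply Lemma~\ref{lem:Leb deco} to the pair $(R_{i-1},Q_i)$ to write $R_{i-1}=k_iP_a^{(i)}+l_iP_s^{(i)}$ with $P_a^{(i)}\ll Q_i$ and $P_s^{(i)}\perp Q_i$, both invariant. Since $Q_i$ is ergodic and $P_a^{(i)}$ is an invariant probability with $P_a^{(i)}\ll Q_i$, its Radon--Nikodym density is $T$-invariant (here invertibility of $T$ is used), hence $Q_i$-a.e.\ constant, so $P_a^{(i)}=Q_i$; thus $R_{i-1}=k_iQ_i+l_iR_i$ with $R_i:=P_s^{(i)}$. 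The degenerate cases $R_{i-1}\ll Q_i$ and $R_{i-1}\perp Q_i$, in which one of $k_i,l_i$ vanishes, only simplify the bookkeeping. Unwinding the recursion expresses $P$ as a convex combination $\alpha_1Q_1+\dots+\alpha_nQ_n+\alpha_{n+1}Q_{n+1}$ with $Q_{n+1}:=R_n$, the $\alpha_i$ being the accumulated products of the $k$'s and $l$'s.

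Two structural facts about $Q_{n+1}$ then follow cheaply. From $R_i\ll R_{i-1}$ at every step (immediate from $R_{i-1}=k_iQ_i+l_iR_i$ with nonnegative summands) one gets $Q_{n+1}\ll P$, and combining this with $R_i\perp Q_i$ yields $Q_{n+1}\perp Q_i$ for every $i\le n$, as required. Moreover $Q_{n+1}$ carries no ergodic component inside $\operatorname{core}(V)$, since any ergodic measure in $\operatorname{core}(V)$ is one of the $Q_i$, all of which are singular to $Q_{n+1}$.

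The main obstacle is to show $Q_{n+1}\in\operatorname{core}(V)$, i.e.\ $Q_{n+1}(A)\le V(A)$ for every $A\in\mathcal{F}$; this is delicate precisely because $V(\mathcal{I})=\{0,1\}$ is \emph{not} assumed, so the control of Lemma~\ref{lem:control} is unavailable. My plan is to mimic the Hahn--Banach step in the proof of Lemma~\ref{lem:control}, which only requires the inequality $Q_{n+1}(A)\le V(A)$ on the invariant $\sigma$-algebra $\mathcal{I}$: once that restricted inequality is in hand, the sublinear functional $I(f)=\sup_{P'\in\operatorname{core}(V)}\int f\,dP'$ dominates $f\mapsto\int f\,dQ_{n+1}$ on $B(\O,\mathcal{I})$, and a Hahn--Banach extension, Riesz representation, and the invariant-skeleton uniqueness of Lemma~\ref{lem:core of invariant lower probabilities}(i) together with Lemma~2.8 of \cite{FHLZ} force the extended measure to coincide with $Q_{n+1}$, placing it in $\operatorname{core}(V)$. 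Everything thus reduces to proving $Q_{n+1}(A)\le V(A)$ for $A\in\mathcal{I}$, which I would attack using $Q_{n+1}\ll P$, part~(ii) of Lemma~\ref{lem:core of invariant lower probabilities}, and an invariant set $C\in\mathcal{I}$ separating $Q_{n+1}$ from $Q_1,\dots,Q_n$ --- built, using invertibility of $T$, as $\bigcap_{k\in\mathbb{Z}}T^{-k}D$ from a set $D$ supporting the singular part. I expect the real work, and the genuinely subtle point of the whole theorem, to lie in establishing this $\mathcal{I}$-inequality, since conditioning $P$ on $C$ can a priori amplify $V$-measures.

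For uniqueness, suppose $P=\sum_{i\le n}\beta_iQ_i+\beta_{n+1}Q'_{n+1}$ is another such decomposition. Since $Q_1,\dots,Q_{n+1}$ are mutually singular, the same invertibility argument produces invariant sets $C_i$ with $Q_i(C_i)=1$ and $Q_j(C_i)=0$ for $j\ne i$ (including $j=n+1$); testing both decompositions against $C_i$ pins down $\beta_i=\alpha_i$ for $i\le n$, and then $\beta_{n+1}Q'_{n+1}=P-\sum_{i\le n}\alpha_iQ_i=\alpha_{n+1}Q_{n+1}$ as measures forces $\beta_{n+1}=\alpha_{n+1}$ and, when $\alpha_{n+1}>0$, $Q'_{n+1}=Q_{n+1}$.
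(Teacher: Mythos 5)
Your construction of the decomposition itself is correct, and in fact cleaner than the paper's: you peel off $Q_1,\ldots,Q_n$ iteratively with Lemma~\ref{lem:Leb deco} plus the standard fact that an invariant probability absolutely continuous with respect to an ergodic one equals it, whereas the paper introduces an auxiliary upper probability $V'(A)=\sup\{P'(A):P'\in\operatorname{core}(V),\ P'\ll \tfrac{1}{n}\sum_{i=1}^nQ_i\}$, proves $V'(\mathcal{I})=\{0,1\}$, and then invokes Theorem~\ref{thm:1} and Lemma~\ref{lem:control} for $V'$ after a single application of Lemma~\ref{lem:Leb deco}. Your uniqueness argument via separating invariant sets is also fine. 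The genuine gap is the step you yourself flag as ``the real work'': proving $Q_{n+1}\in\operatorname{core}(V)$. Your reduction of this to the inequality $Q_{n+1}(A)\le V(A)$ for $A\in\mathcal{I}$ is valid (the Hahn--Banach/Riesz/invariant-skeleton portion of the proof of Lemma~\ref{lem:control} uses nothing about finite ergodic components), but you then only sketch an ``attack'' on that inequality and never establish it. And it is exactly the crux: if $A\in\mathcal{I}$ lies inside an invariant set carrying $Q_{n+1}$ and null for every $Q_i$, then $P(A)=\a_{n+1}Q_{n+1}(A)$, so the needed inequality is equivalent to $P(A)\le \a_{n+1}V(A)$ --- a strengthening of $P\le V$ by the factor $\a_{n+1}\le 1$ which does not follow from $Q_{n+1}\ll P$, from Lemma~\ref{lem:core of invariant lower probabilities}(ii), or from any separating-set construction; your own caveat that conditioning on an invariant set ``can a priori amplify $V$-measures'' is precisely the unresolved difficulty.

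The paper closes this gap by a mechanism your proposal does not contain: weak* compactness. Since $\mathcal{M}(T)\cap\operatorname{core}(V)$ is compact and convex in the weak* topology (Lemma~\ref{lem:compactness of m}), the Krein--Milman theorem writes $P$ as a weak* limit of a net of convex combinations of extreme points of this set; as $Q_1,\ldots,Q_n$ are among these extreme points (Lemma~\ref{lem: extreme has no condition}), the uniqueness of the decomposition forces the coefficients of $Q_1,\ldots,Q_n$ along the net to converge to $\ell_1,\ldots,\ell_n$, hence the residual terms converge to $\ell_{n+1}Q_{n+1}$, and closedness of $\mathcal{M}(T)\cap\operatorname{core}(V)$ then yields $Q_{n+1}\in\mathcal{M}(T)\cap\operatorname{core}(V)$. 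Some such compactness argument (or a genuinely new idea for the $\mathcal{I}$-inequality) is required; as written, your proposal proves existence and uniqueness of a decomposition $P=\sum_{i=1}^{n}\a_iQ_i+\a_{n+1}Q_{n+1}$ with $Q_{n+1}\in\mathcal{M}(T)$ singular to each $Q_i$, but not the assertion $Q_{n+1}\in\operatorname{core}(V)$, which is part of the theorem.
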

\begin{proof}
	Let 
	\[V'(A)=\sup\{P(A):P\in\operatorname{core}(V)\text{ and }P\ll \frac{1}{n}\sum_{i=1}^nQ_i\}\text{ for any }A\in\mathcal{F}.\]
	It is easy to see that  $V'$ is an upper probability. Now we prove it is $T$-invariant. Indeed, for any $P\in\operatorname{core}(V)$ with $P\ll \frac{1}{n}\sum_{i=1}^nQ_i$, one has $P\circ T, P\circ T^{-1}\in\operatorname{core}(V)$ and $P\circ T, P\circ T^{-1}\ll \frac{1}{n}\sum_{i=1}^nQ_i$, as $Q_i$, $i=1,2,\ldots,n$ and $V$ are $T$-invariant, and $T$ is invertible. Thus, for any $A\in\mathcal{F}$ and $P\in\operatorname{core}(V)$ with $P\ll \frac{1}{n}\sum_{i=1}^nQ_i$, 
	$P(T^{-1}A)\le V'(A),$
	which implies that 
	\[V'(T^{-1}A)\le V'(A).\]
	Conversely, note that for any $A\in\mathcal{F}$ and $P\in\operatorname{core}(V)$ with $P\ll \frac{1}{n}\sum_{i=1}^nQ_i$,
	\[P(A)=P\circ T(T^{-1}A)\le V'(T^{-1}A),\]
	which implies that
	\[V'(A)\le V'(T^{-1}A).\]
	Thus, $V'$ is $T$-invariant.  Since   $\operatorname{core}(V)\cap \mathcal{ M}^e(T)=\{Q_1,\ldots,Q_n\}$, it follows from the construction that $\operatorname{core}(V')\cap \mathcal{ M}^e(T)=\{Q_1,\ldots,Q_n\}$.
	
	Now we prove that  $V'(\mathcal{I})=\{0,1\}$. 
	Indeed, for any $A\in\mathcal{I}$, if there exists $i\in\{1,2,\ldots,n\}$ such that $Q_i(A)>0$, then $Q_i(A)=1$, as $Q_i$ is ergodic. Thus, $V(A)=1$, as $V(A)\ge Q_i(A)$. Otherwise, for each $i\in\{1,2,\ldots,n\}$, $Q_i(A)=0$, which implies that for any $P\in\operatorname{core}(V)\text{ and }P\ll \frac{1}{n}\sum_{i=1}^nQ_i$, $P(A)=0$. Thus, $V'(A)=0$. As $A\in\mathcal{I}$ is arbitrary, we have $V'(\mathcal{I})=\{0,1\}$.
	
	Given any $P\in\mathcal{ M}(T)\cap \operatorname{core}(V)$, applying Lemma \ref{lem:Leb deco} on $P$ and $\frac{1}{n}\sum_{i=1}^nQ_i$, there exist  $k,l>0$ with $k+l=1$ and  $P_a,P_s\in \mathcal{ M}(T)$  such that
	\[P=kP_a+lP_s, \\ P_a\ll \frac{1}{n}\sum_{i=1}^nQ_i\text{ and }P_s\perp \frac{1}{n}\sum_{i=1}^nQ_i.\]
	Thus, by Lemma \ref{lem:control}, we have that $P_a\in\operatorname{core}(V')$, which together with the fact $V'(\mathcal{I})=\{0,1\}$, by Theorem \ref{thm:1}, there exist $0\le \ell^a_1,\ldots,\ell^a_{n}\le 1$ with $\sum_{i=1}^{n}\ell^a_i=1$  such that 
	\[P_a=\ell^a_1Q_1+\ldots+\ell^a_nQ_n.\] 
	Let $Q_{n+1}=P_s$, $\ell_i=\ell_i^a\cdot k$, and $\ell_{n+1}=l$. Then $Q_{n+1}\in\mathcal{ M}(T)$, and
	\[P=\ell_1Q_1+\ldots+\ell_nQ_n+\ell_{n+1}Q_{n+1}.\]

	If $\ell_{n+1}=0$, the proof is completed. Now, we check that $Q_{n+1}\in \mathcal{ M}(T)\cap\operatorname{core}(V)$ when  $\ell_{n+1}\neq 0$. Indeed, it is clear that $\mathcal{ M}(T)\cap\operatorname{core}(V)$ is convex, which together with Lemma \ref{lem:compactness of m}, implies it is compact convex subset in the weak* topology. Since $P\in  \mathcal{ M}(T)\cap\operatorname{core}(V)$, it follows from Lemma \ref{lem:Krein–Milman Theorem} that  there exists a net $\{\mu_\a\}_{\a \in I}\subset \mathcal{ M}(T)\cap\operatorname{core}(V)$, where for each $\a\in I$, $\mu_\a$ is the convex combination of elements in $\operatorname{ex}( \mathcal{ M}(T)\cap\operatorname{core}(V))$, such that \begin{equation}\label{eq:6.16}
		\lim_{\a\in I}\mu_\a=P .
	\end{equation}By Lemma \ref{lem: extreme has no condition},  $Q_1,\ldots,Q_n\in \operatorname{ex}( \mathcal{ M}(T)\cap\operatorname{core}(V))$, and so  for each $\a\in I$, $\mu_\a$ can be written as the following form
	\[\mu_\a=\sum_{i=1}^{n}\l_i^\a Q_i+\sum_{i=n+1}^{l_\a}\l_i^\a\nu_i,\ \text{ where } \sum_{i=1}^{l_\a}\l_i^\a=1,\ \nu_i\in \operatorname{ex}( \mathcal{ M}(T)\cap\operatorname{core}(V))\setminus\{Q_1,\ldots,Q_n\},\ \a\in I.\]
	This combined with \eqref{eq:6.16} and the uniqueness of the decomposition of $P$, we have that for each $i\in\{1,2,\ldots,n\}$,
	\[\lim_{\a\in I}\l_i^\a=\ell_i,\]
	and hence
	\[\lim_{\a\in I}\sum_{i=n+1}^{l_\a}\l_i^\a\nu_i= \lim_{\a\in I}\left(\mu_\a-\sum_{i=1}^{n}\l_i^\a Q_i\right)=P-\sum_{i=1}^{n}\ell_iQ_i=\ell_{n+1}Q_{n+1}.\]
	By the closedness of $ \mathcal{ M}(T)\cap\operatorname{core}(V)$, we have $Q_{n+1}\in  \mathcal{ M}(T)\cap\operatorname{core}(V)$. Now we finish the proof.
\end{proof}

\section{Finite ergodic components in terms of multiplicity of the eigenvalue of Koopman operator}\label{sec: invariant functions}
In this section, we provide the following characterization of finite ergodic components in terms of the space of invariant functions. 
Let $(\O,\mathcal{F},V,T)$ be  an UPPS.
Define  the Koopman operator $U_T:B_b(\O,\mathcal{F})\to B_b(\O,\mathcal{F})$ by $U_Tf=f\circ T$.
\begin{theorem}\label{thm:ergodicity of upper v.s. invariant functions}
	Let $(\O,\mathcal{F},V,T)$ be an UPPS. Then the following three statements are equivalent:
	\begin{enumerate}
		\item [(i)] $V$ is of finite ergodic components;
		\medskip
		\item [(ii)]there exists a measurable partition $\{A_1,\ldots,A_n\}$ of $\O$ with  $V(A_i)=1$, $i=1,2,\ldots,n$ such that for any $T$-invariant function $f$, i.e., $f\circ T=f$, there exists $c_1,\ldots,c_n\in\mathbb{C}$ such that 
		\[f=\sum_{j=1}^nc_j1_{A_j},\text{ }V\text{-a.s.};\]
		
		\item [(iii)] there exists a measurable partition $\{A_1,\ldots,A_n\}$ of $\O$ with  $V(A_i)=1$, $i=1,2,\ldots,n$ such that the eigenspace of the Koopman operator $U_T$ corresponding to eigenvalue $1$ is spanned by $\{1_{A_i}:i=1,2,\ldots,n\}$ and  the eigenvalue $1$ of $U_T$ is of multiplicity $n$.
	\end{enumerate}
\end{theorem}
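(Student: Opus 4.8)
The plan is to build on the elementary but crucial observation that the eigenspace of $U_T$ for the eigenvalue $1$ is exactly the space of $T$-invariant functions, since $U_Tf=f$ means $f\circ T=f$. Consequently (ii) and (iii) are two phrasings of the same spanning statement, and I would establish the cycle (i) $\Rightarrow$ (iii) $\Rightarrow$ (ii) $\Rightarrow$ (i). A preliminary point to fix is that the eigenspace and its multiplicity must be read modulo $V$-a.s. equality (matching the convention under which eigenvalue $1$ is \emph{simple} in the ergodic regime): the genuine kernel of $U_T-I$ on $B_b(\O,\mathcal{F})$ is usually infinite dimensional, because every $A\in\mathcal{I}$ with $V(A)=0$ yields a nonzero invariant indicator that vanishes $V$-a.s.

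For (i) $\Rightarrow$ (iii) I would take the partition $\{A_1,\dots,A_n\}\subset\mathcal{I}$ with $V(A_i)=1$ supplied by Theorem \ref{thm:1}, together with the unique ergodic $Q_i\in\operatorname{core}(V_i)\cap\mathcal{M}^e(T)$ on each component. Since $A_i\in\mathcal{I}$, each $1_{A_i}$ is a genuine eigenfunction; and because the $A_i$ are disjoint with $V(A_i)=1$, the $1_{A_i}$ are $V$-a.s. linearly independent, giving multiplicity at least $n$. For the reverse bound I would show any ($V$-a.s.) invariant $f$ lies in their $V$-a.s. span: treating real and imaginary parts separately, the Birkhoff averages $\frac1N\sum_{k=0}^{N-1}f\circ T^k$ equal $f$ $V$-a.s. (as $f\circ T^k=f$ $V$-a.s. for every $k$, using the invariance of $V$), while the law of large numbers \cite[Theorem 3.3]{FHLZ} applied to the ergodic UPPS $(A_i,\mathcal{F}_i,V_i,T)$ identifies the same limit as $\int f\,dQ_i$ on a $V_i$-full subset of $A_i$. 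Hence $f=\int f\,dQ_i$ on $A_i$ up to a $V_i$-null set, so $f=\sum_{i=1}^n\big(\int f\,dQ_i\big)1_{A_i}$ $V$-a.s., and the multiplicity is exactly $n$.

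Assembling the componentwise identities into one global $V$-a.s. statement is where I would be careful. On the one hand $V_i\le V$, so $V$-null sets are automatically $V_i$-null; on the other hand each exceptional set $N_i\subset A_i$ with $V_i(N_i)=0$ has to be promoted to $V(N_i)=0$, which I would do through Lemma \ref{lem:core of invariant lower probabilities}(ii): for every $P\in\operatorname{core}(V)$ the invariant skeleton splits as $\hat P=\sum_j\alpha_jQ_j$ by Theorem \ref{thm:1}(iii), and $Q_j(N_i)=0$ for all $j$ (from $Q_i\in\operatorname{core}(V_i)$ when $j=i$, and $Q_j(A_i)=0$ by disjointness when $j\ne i$), so $\hat P(N_i)=0$ and therefore $V(N_i)=0$; subadditivity then kills the union. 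The remaining implications are short: (iii) $\Rightarrow$ (ii) holds because a $T$-invariant $f$ is an eigenfunction, hence $V$-a.s. a combination of the $1_{A_j}$ with the same partition; and for (ii) $\Rightarrow$ (i), given $B\in\mathcal{I}$ the invariant indicator $1_B=\sum_jc_j1_{A_j}$ $V$-a.s. forces $c_j\in\{0,1\}$ on each $A_j$ (where $V(A_j)=1$), so either all $c_j=0$ and $V(B)=0$, or some $c_{j_0}=1$ and then $V(A_{j_0}\setminus B)=0$ gives $V(B)=1$; thus $V(\mathcal{I})=\{0,1\}$ and Theorem \ref{thm:1} returns (i).

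I expect the main difficulty to be conceptual rather than computational, and to lie in two linked places: choosing the right ($V$-a.s.) notion of eigenspace so that the multiplicity is finite and equal to $n$ rather than infinite, and controlling the mismatch between the local null sets of the components $V_i$ and the global null sets of $V$. The inequality $V_i\le V$ handles one direction for free, but the converse transfer genuinely needs Lemma \ref{lem:core of invariant lower probabilities}(ii) together with the skeleton decomposition of Theorem \ref{thm:1}; this is precisely what upgrades the componentwise Birkhoff identities to a single $V$-a.s. equality and certifies that the invariant indicators span the whole eigenspace.
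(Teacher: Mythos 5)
Your proof is correct and takes essentially the same route as the paper's: the forward direction rests on the partition and ergodic probabilities supplied by Theorem \ref{thm:1} together with the componentwise law of large numbers of \cite{FHLZ} and the null-set transfer via Lemma \ref{lem:core of invariant lower probabilities}(ii) (i.e., you re-derive the content of Theorem \ref{thm:Birkhoff's ergodic} rather than citing it), and the converse is the same $\{0,1\}$-valued indicator argument yielding $V(\mathcal{I})=\{0,1\}$ and then invoking Theorem \ref{thm:1}. Your explicit handling of the $V$-a.s. reading of the eigenspace and of the $V$-a.s. linear independence of the $1_{A_i}$ (so the multiplicity is exactly $n$) makes precise details the paper's terse proof of (ii) $\Leftrightarrow$ (iii) leaves implicit.
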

\begin{proof}

	(i) $\Rightarrow$ (ii) is a direct result of Theorem \ref{thm:Birkhoff's ergodic}.
	
	(ii) $\Rightarrow$ (i). For any $A\in\mathcal{I}$, it is easy to see that $1_A$ is an $T$-invariant function, so by assumption (ii), there exists $c_1,\ldots,c_n\in\{0,1\}$ such that
	\[1_A=\sum_{j=1}^nc_j1_{A_j},\text{ }V\text{-a.s.}\]
	If $c_j=0$ for each $j\in\{1,2,\ldots,n\}$, then $1_A=0$, $V$-a.s., that is, $V(A)=0$; if there exists $c_j=1$, then $V(A)\ge V(A_j)=1$. Thus, $V(\mathcal{I})=\{0,1\}$, and hence it is of finite ergodic components.
	
	(ii) $\Leftrightarrow$ (iii) follows from the equivalence of $f\circ T=f$, $V$-a.s., i.e., $f$ is an $T$-invariant functions and $U_Tf=f$,  , $V$-a.s., i.e., $f$ is the eigenfunction of $U_T$ corresponding to the eigenvalue $1$. 
\end{proof}

\section{Finite ergodic components in terms of independence}\label{sec:independence}
In this section, we provide the following characterization of finite ergodic components in terms of ``weak'' independence.
\begin{theorem}\label{thm:ergodicity of upper}
	Let $(\O,\mathcal{F},V,T)$ be an UPPS. Then the following two statements are equivalent:
	\begin{enumerate}
		\item [(i)]$V$ is of finite ergodic components;
		\medskip
		\item [(ii)] there exist  $Q_1,\ldots,Q_n\in\mathcal{ M}(T)\cap\operatorname{core}(V)$ and $A_1,\ldots,A_n\in\mathcal{I}$ satisfying that  $\O=\cup_{j=1}^nA_j$, $Q_j(A_j)=1$, and $A_j\cap A_k=\emptyset$ when $j\neq k$ such that for any $B,C\in \mathcal{F}$,
		\begin{equation}\label{eq:123}
			\lim_{N\to\infty}\int\frac{1}{N}\sum_{i=0}^{N-1} 1_B\cdot(1_C\circ T^i)dV=\sum_{j=1}^nQ_j(C)(V(\cup_{k=0}^jA_k\cap B)-V(\cup_{k=0}^{j-1}A_k\cap B)),
		\end{equation}
		where $A_0=\emptyset$.
	\end{enumerate}
\end{theorem}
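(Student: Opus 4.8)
The plan is to treat the two implications separately, using for the forward direction the pointwise Birkhoff theorem already proved in Theorem \ref{thm:Birkhoff's ergodic} together with the Choquet dominated convergence theorem (Lemma \ref{lem:dominate convergence}), and for the converse a single-block testing argument against invariant sets.

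For (i) $\Rightarrow$ (ii) I would take the data $Q_1,\dots,Q_n\in\mathcal{M}^e(T)\cap\operatorname{core}(V)$ and the invariant partition $\{A_1,\dots,A_n\}$ furnished by Theorem \ref{thm:Birkhoff's ergodic}. Fixing $B,C\in\mathcal{F}$, I set
\[
g_N=\frac{1}{N}\sum_{i=0}^{N-1}1_B\cdot(1_C\circ T^i)=1_B\cdot\frac{1}{N}\sum_{i=0}^{N-1}1_C\circ T^i.
\]
Applying the pointwise statement \eqref{eq:4.81} to $f=1_C$, the averages converge $V$-a.s.\ to $\sum_j Q_j(C)1_{A_j}$, so $g_N\to h:=\sum_j Q_j(C)1_{B\cap A_j}$ $V$-a.s.; since $0\le g_N\le 1$, Lemma \ref{lem:dominate convergence} gives $\lim_N\int g_N\,dV=\int h\,dV$. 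It then remains to evaluate the Choquet integral of the simple function $h$ via the layer-cake formula $\int h\,dV=\int_0^\infty V(\{h\ge t\})\,dt$. The super-level sets are precisely the sets $B\cap\bigcup_k A_k$ where the union runs over the blocks with $Q_k(C)\ge t$, so after arranging the blocks so that these coefficients decrease along the index the level sets form a nested chain $\emptyset=S_0\subset S_1\subset\cdots$ with $S_j=\bigcup_{k\le j}(A_k\cap B)$, and an Abel summation converts $\int_0^\infty V(\{h\ge t\})\,dt$ into $\sum_j Q_j(C)\,(V(S_j)-V(S_{j-1}))$, which is the right-hand side of \eqref{eq:123}.

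For (ii) $\Rightarrow$ (i) I would use \eqref{eq:123} to show each restricted component is ergodic, testing against sets that live inside a single block so that no multi-block interaction occurs. Fix $j$ and an invariant $D\subset A_j$, and take $B=C=D$. Since $D$ is invariant, $1_D\circ T^i=1_D$, so the left-hand side collapses to $\int 1_D\,dV=V(D)$. On the right-hand side, because $D\subset A_j$ the only nonvanishing increment is the $j$-th one, equal to $V(D)-V(\emptyset)=V(D)$, and the matching coefficient is $Q_j(D)$; hence the right-hand side equals $Q_j(D)\,V(D)$. Thus $V(D)=Q_j(D)V(D)$, so $V(D)>0$ forces $Q_j(D)=1$. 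Applying this to both $D$ and $A_j\setminus D$, and using $Q_j(A_j)=1$ together with the subadditivity of $V$ (so that $V(D)+V(A_j\setminus D)\ge V(A_j)=1$), I get that exactly one of $V(D),V(A_j\setminus D)$ equals $1$ and the other $0$. Since $V_j\le V$ and $V_j(A_j)=1$, this means each $(A_j,\mathcal{F}_j,V_j,T)$ is ergodic, so $V$ is of finite ergodic components.

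I expect the Choquet-integral evaluation in the forward direction to be the main obstacle. Because the Choquet integral is nonadditive, $\int h\,dV$ is \emph{not} $\sum_j Q_j(C)\,V(B\cap A_j)$; its correct value is the telescoping sum along the chain of super-level sets, whose order is dictated by the sizes of the coefficients $Q_j(C)$. Getting this bookkeeping right, and in particular reconciling the $C$-dependent ordering of the level sets with the single fixed partition $\{A_1,\dots,A_n\}$ demanded by the statement, is the delicate point; the converse, by contrast, reduces to a clean one-block computation that is insensitive to any ordering.
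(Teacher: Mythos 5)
Your overall architecture matches the paper's. For (i) $\Rightarrow$ (ii) the paper does exactly what you propose: apply Theorem \ref{thm:Birkhoff's ergodic} to $f=1_C$, pass to the limit with Lemma \ref{lem:dominate convergence}, and evaluate the Choquet integral of the simple function $\sum_j Q_j(C)1_{A_j\cap B}$ by the simple-function formula (the paper cites \cite[Proposition 4.10]{MM} for its identity \eqref{eq:10.2916.10}). Your converse, however, takes a genuinely different and somewhat more direct route: you test \eqref{eq:123} with $B=C=D$ for invariant $D\subset A_j$ and conclude that each $V_j$ is ergodic, whereas the paper first proves each $Q_j$ is ergodic (taking $B=A_j\setminus(A\cap A_j)$, $C=A\cap A_j$), then proves $V(\mathcal{I})=\{0,1\}$ (taking $B=A^c$ and $B=\O$ with $C=A$), and finally invokes Theorem \ref{thm:1}. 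Your one-block computation is correct: the right side collapses to $Q_j(D)V(D)$, and the exclusion/exhaustion argument combining additivity of $Q_j$ with subadditivity of $V$ is sound. The only point you should add is the routine verification that $V_j$ defined by \eqref{eq:maindeifnition} is an upper probability (its defining set is nonempty because $Q_j$ lies in it, and its continuity is inherited from $V$); the paper's detour through Theorem \ref{thm:1} gets this for free.

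The ``delicate point'' you flag in the forward direction is not a defect of your reasoning; it is a genuine flaw in the statement and in the paper's own proof. As you observe, the Choquet integral of $h=\sum_j Q_j(C)1_{A_j\cap B}$ equals the telescoping sum only when the blocks are enumerated so that the coefficients $Q_j(C)$ decrease, and this enumeration depends on $C$, while \eqref{eq:123} fixes one enumeration for all $B,C$. This cannot be repaired as stated. Concretely, take $\O=\{1,2\}$, $T$ the identity, $V=\max(\delta_1,\delta_2)$; then $V$ is of finite ergodic components, and the side conditions of (ii) force $A_1,A_2$ to be the two singletons with $Q_j=\delta_j$ (the case $n=1$ fails since $V$ is not additive). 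For $B=\O$ and $C=A_2$ the left side of \eqref{eq:123} equals $V(A_2)=1$, while the right side equals $0\cdot\bigl(V(A_1)-0\bigr)+1\cdot\bigl(V(\O)-V(A_1)\bigr)=0$; relabeling the blocks merely moves the failure to $C=A_1$. The paper's identity \eqref{eq:10.2916.10} silently commits this error, since \cite[Proposition 4.10]{MM} requires the values arranged in decreasing order, and order-independence of the telescoping sum would force additivity of $V$ across the blocks, which fails here ($V(A_1)+V(A_2)=2\neq 1=V(\O)$). The correct formulation of (ii) --- and the one your argument actually proves --- computes the right-hand side, for each $C$, with the blocks rearranged so that $Q_{j_1}(C)\ge\cdots\ge Q_{j_n}(C)$; with that amendment your forward direction (and the paper's) goes through verbatim, and your converse is unaffected because your test sets involve only one block.
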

\begin{proof}
	(i) $\Rightarrow$ (ii). Suppose that $V$ is of finite ergodic components with respect to $Q_1,\ldots,Q_n$ and $A_1,\ldots,A_n$. 	 Computing by the definition of Choquet integral (see for example \cite[Proposition 4.10]{MM}), we obtain that 
	\begin{equation}\label{eq:10.2916.10}
	\int 1_B\left(\sum_{j=1}^nQ_j(C)1_{A_j}\right)dV=\sum_{j=1}^nQ_j(C)(V(\cup_{k=0}^jA_k\cap B)-V(\cup_{k=0}^{j-1}A_k\cap B)).
	\end{equation}

	 Given $B,C\in\mathcal{F}$,
	it follows from Lemma \ref{lem:dominate convergence} and  Theorem \ref{thm:Birkhoff's ergodic} that 
	\begin{align*}
		\lim_{N\to\infty}\int \frac{1}{N}\sum_{i=0}^{N-1}1_B(\o)1_C(T^i\o)dV(\o)&=\int 1_B\left(\sum_{j=1}^nQ_j(C)1_{A_j}\right)dV,
	\end{align*}
	which together with \eqref{eq:10.2916.10} implies (ii) holds.

	(ii) $\Rightarrow$ (i). By Theorem \ref{thm:1}, we only need to prove $V(\mathcal{I})=\{0,1\}$. Firstly, we prove for each fixed $j\in\{1,2,\ldots,n\}$, $Q_j$ is ergodic. Indeed,  for any $A\in\mathcal{I}$, let $B=A_j\setminus(A\cap A_j)$ and $C=A\cap A_j$ in \eqref{eq:123}.  Now one has  that 
		\begin{align*}
		0&=	\lim_{N\to\infty}\int\frac{1}{N}\sum_{i=0}^{N-1} 1_B\cdot(1_C\circ T^i)dV=
		\sum_{j=1}^nQ_j(C)(V(\cup_{k=0}^jA_k\cap B)-V(\cup_{k=0}^{j-1}A_k\cap B)),
		\end{align*}
which, together with \eqref{eq:10.2916.10}, demonstrates that
	\[0=	\int 1_B\left(\sum_{k=1}^nQ_k(C)1_{A_k}\right)dV=Q_j(A\cap A_j)V(A_j\setminus(A\cap A_j))=Q_j(A)V(A_j\setminus A).\]
	If $Q_j(A)>0$, then $V(A_j\setminus A)=0$, which implies that $Q_j(A^c)=Q_j(A_j\setminus A)=0$. Thus, $Q_j$ is ergodic.
	
	Now we prove $V(\mathcal{I})=\{0,1\}$. For any $A\in\mathcal{I}$, let $B=A^c$, and $C=A$ in \eqref{eq:123}. Then 
	\[\sum_{j=1}^nQ_j(A)(V(\cup_{k=0}^jA_k\cap A^c)-V(\cup_{k=0}^{j-1}A_k\cap A^c))=0.\]
Now if $Q_j(A)=0$ for all  $j=1,2,\ldots,n$, then by considering $B=\O$ and $C=A$ in \eqref{eq:123}, we have that $V(A)=0$. If there exists $j\in\{1,2,\ldots,n\}$ such that $Q_j(A)>0$, then by the ergodicity of $Q_j$, we have $Q_j(A)=1$, and hence $V(A)=1$. This shows that $V$ is of finite ergodic components.
	
\end{proof}
\begin{rem}
	When $n=1$, the right hand side of \eqref{eq:123} is equal to $Q_1(C)V(B)$, which was proved in Theorem 4.4 of \cite{FHLZ}. In that case, $V$ is ergodic.
\end{rem}

Moreover, we provide a characterization of ergodicity of  an upper probability via the asymptotic independence of probabilities in its core.
\begin{theorem}\label{thm:ergodic via independece of core}
	Let $(\O,\mathcal{F},V,T)$ be an UPPS. Then the following statements are equivalent:
	\begin{enumerate}
		\item [(i)] $V$ is of finite ergodic components;
		\medskip
		\item [(ii)]there exist  $Q_1,\ldots,Q_n\in\mathcal{ M}(T)\cap\operatorname{core}(V)$ and $A_1,\ldots,A_n\in\mathcal{I}$ satisfying that  $\O=\cup_{j=1}^nA_j$, $Q_j(A_j)=1$, and $A_j\cap A_k=\emptyset$ when $j\neq k$ such that  for any  $P\in\operatorname{core}(V)$, 
		\begin{equation}\label{eq:111111}
			\lim_{N\to\infty}\frac{1}{N}\sum_{i=0}^{N-1}P(B\cap T^{-i}C)=\sum_{j=1}^nQ_j(C)P(A_j\cap B)\text{ for any }B,C\in\mathcal{F}.
		\end{equation}
	\end{enumerate}
\end{theorem}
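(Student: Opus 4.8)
The plan is to follow the same template used in Theorems~\ref{thm:Birkhoff's ergodic} and~\ref{thm:ergodicity of upper}: deduce the forward implication from the explicit Birkhoff limit together with dominated convergence, and the reverse implication by specializing the test sets $B,C$ and invoking Theorem~\ref{thm:1}.

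For $(i)\Rightarrow(ii)$, I would take the probabilities $Q_1,\dots,Q_n$ and the invariant partition $A_1,\dots,A_n$ furnished by Theorem~\ref{thm:Birkhoff's ergodic}. Fix $P\in\operatorname{core}(V)$ and $B,C\in\mathcal{F}$. The key identity is
\[
\frac1N\sum_{i=0}^{N-1}P(B\cap T^{-i}C)=\int 1_B(\o)\,\Big(\frac1N\sum_{i=0}^{N-1}1_C(T^i\o)\Big)\,dP(\o),
\]
since $1_{T^{-i}C}(\o)=1_C(T^i\o)$. By Theorem~\ref{thm:Birkhoff's ergodic} the Birkhoff averages $\frac1N\sum_{i=0}^{N-1}1_C(T^i\o)$ converge to $\sum_{j=1}^nQ_j(C)1_{A_j}(\o)$ for $V$-a.s.\ $\o$, and since $P\le V$ this convergence also holds $P$-a.s. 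As the integrands are bounded by $1$ and $P$ is a genuine countably additive probability, the ordinary dominated convergence theorem lets me pass to the limit, giving
\[
\lim_{N\to\infty}\frac1N\sum_{i=0}^{N-1}P(B\cap T^{-i}C)=\int 1_B\sum_{j=1}^nQ_j(C)1_{A_j}\,dP=\sum_{j=1}^nQ_j(C)P(A_j\cap B),
\]
which is precisely \eqref{eq:111111}.

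For $(ii)\Rightarrow(i)$, by Theorem~\ref{thm:1} it suffices to show $V(\mathcal{I})=\{0,1\}$, and the first step is to prove each $Q_j$ is ergodic. Fix $m$ and, using that $Q_m\in\operatorname{core}(V)$, apply \eqref{eq:111111} with $P=Q_m$, an arbitrary $B\in\mathcal{F}$, and $C=A\in\mathcal{I}$. Since $A$ is invariant, $T^{-i}A=A$, so the left-hand side collapses to $Q_m(B\cap A)$; on the right-hand side $Q_m(A_j\cap B)=0$ for $j\neq m$ (because $A_j\subset A_m^c$ and $Q_m(A_m)=1$), while $Q_m(A_m\cap B)=Q_m(B)$, so the sum equals $Q_m(A)Q_m(B)$. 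Hence $Q_m(B\cap A)=Q_m(A)Q_m(B)$ for every $B$; taking $B=A$ yields $Q_m(A)=Q_m(A)^2$, i.e.\ $Q_m(A)\in\{0,1\}$, so $Q_m$ is ergodic.

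Finally, to obtain $V(\mathcal{I})=\{0,1\}$, fix $A\in\mathcal{I}$. If $Q_j(A)>0$ for some $j$, ergodicity gives $Q_j(A)=1$, whence $V(A)\ge Q_j(A)=1$. Otherwise $Q_j(A)=0$ for all $j$; applying \eqref{eq:111111} with $B=\O$ and $C=A$ (again $T^{-i}A=A$) gives $P(A)=\sum_{j=1}^nQ_j(A)P(A_j)=0$ for every $P\in\operatorname{core}(V)$, so $V(A)=\sup_{P\in\operatorname{core}(V)}P(A)=0$ by the definition of an upper probability. Thus $V(\mathcal{I})=\{0,1\}$, and Theorem~\ref{thm:1} concludes. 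The only delicate points are the transfer of $V$-a.s.\ to $P$-a.s.\ convergence in the forward direction (immediate from $P\le V$) and the correct choice of test sets $B,C$ in the reverse direction; everything else is routine.
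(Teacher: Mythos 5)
Your proposal is correct and follows essentially the same route as the paper: the forward direction is Theorem \ref{thm:Birkhoff's ergodic} plus dominated convergence (using $P\le V$ to pass from $V$-a.s.\ to $P$-a.s.\ convergence), and the reverse direction reduces to $V(\mathcal{I})=\{0,1\}$ via Theorem \ref{thm:1} by specializing \eqref{eq:111111}. The only cosmetic difference is that you prove ergodicity of each $Q_j$ by taking $P=Q_j$, invariant $C=A$ and arbitrary $B$ (yielding the product formula $Q_j(A\cap B)=Q_j(A)Q_j(B)$), whereas the paper's referenced argument (from Theorem \ref{thm:ergodicity of upper}) uses arbitrary $P\in\operatorname{core}(V)$ with test sets $B=A_j\setminus A$, $C=A\cap A_j$; both specializations are equally valid.
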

\begin{proof}
	(i) $\Rightarrow$ (ii). Denote $Q_1,\ldots,Q_n$ and $A_1,\ldots,A_n$ as in Theorem \ref{thm:1}. By Theorem \ref{thm:Birkhoff's ergodic} and dominated convergence theorem, one has that 
	\begin{align*}
		\lim_{N\to\infty}\frac{1}{N}\sum_{i=0}^{N-1}P(B\cap T^{-i}C)=&	\lim_{N\to\infty}\frac{1}{N}\sum_{i=0}^{N-1}\int 1_B(\o)\cdot 1_C(T^i\o) dP(\o)\\
		=&\int 1_B(\o)\cdot \sum_{j=1}^n Q_j(C)1_{A_j}(\o) dP(\o)\\
		=& \sum_{j=1}^n Q_j(C)P(A_j\cap B).
	\end{align*}
	
	(ii) $\Rightarrow$ (i). The proof can be obtained by a proof  similar to that of 	(ii) $\Rightarrow$ (i) in Theorem \ref{thm:ergodicity of upper}.
\end{proof}

\section{Application to non-invariant probabilities}

Instead of starting with  a given ergodic upper probability, let us  consider a probability space $(\O,\mathcal{F},P)$ and an invertible measurable transformation $T:\O\to\O$ (note that $P$ may be not $T$-invariant).

Under the	assumption that the limit 
\begin{equation}\label{eq:20.19}
	\lim_{N\to\infty}\frac{1}{N}\sum_{i=0}^{N-1}P(T^{-i}A)
\end{equation} exists for each $A\in\mathcal{F}$ and  $P(\mathcal{I})=\{0,1\}$, the authors  \cite{FHLZ} investigated the law of large numbers and independence for non-invariant probability $P$. With the help of our results for nonlinear probabilities, in this paper, we study a more general class of non-invariant probabilities. Denote by $\#(F)$ 
the cardinality of the set $F$. For convenience, we denote the following assumption:

\medskip

\noindent \textbf{Assumption $(*)$}:  $\#(P(\mathcal{I}))<\infty$. 

Now we prove that Assumption $(*)$ holds if and only if $\O$ has a ``good partition'' in the following sense.		
\begin{definition}\label{def1}
	A measurable partition $\a$ of $\O$ is said to be a finite irreducible invariant set partition (with respect to $P$ and $T$) if $\#(\a)<\infty$ and  for any $A\in \a$, 
	\begin{enumerate}
		\item [(i)]$P(A)>0$;
		\item [(ii)]$A\in\mathcal{I}$;
		\item [(iii)] none of $T$-invariant sets $B\subset A$ with $0<P(B)<P(A)$. 
	\end{enumerate}
In this case,	we call  $(\O,\mathcal{F},P)$   satisfies the finite irreducible invariant set partition assumption.
\end{definition}
\begin{rem}
	If  $(\O,\mathcal{F},P,T)$ has a  finite  irreducible invariant set partition $\a=\{A_1,\ldots,A_n\}$, then the partition is unique in the following sense: if there exists another finite irreducible invariant set partition $\a'=\{A_1',\ldots,A_m'\}$, then $m\ge n$ and there exists $j_1,\ldots,j_n\in\{1,2\ldots,m\}$ such that  for any $i=1,\ldots, n$ $P(A_i\D A_{j_i}')=0$.
\end{rem}
\begin{lemma}\label{lem:good partition}
	Let $(\O,\mathcal{F},P)$ be a probability space and $T:\O\to\O$ be a measurable transformation. Then Assumption $(*)$ holds if and only if  $(\O,\mathcal{F},P,T)$ satisfies the finite irreducible invariant set partition assumption. 
\end{lemma}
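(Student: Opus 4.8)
The plan is to prove both implications of the equivalence in Lemma \ref{lem:good partition}, characterizing Assumption $(*)$ (finiteness of $P(\mathcal{I})$) via the existence of a finite irreducible invariant set partition.

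For the direction that the partition assumption implies Assumption $(*)$, suppose $\a=\{A_1,\ldots,A_n\}$ is a finite irreducible invariant set partition. The key observation is that condition (iii) forces a strong rigidity: for any $B\in\mathcal{I}$ and any $i$, the intersection $B\cap A_i$ is a $T$-invariant subset of $A_i$, so by irreducibility either $P(B\cap A_i)=0$ or $P(B\cap A_i)=P(A_i)$. Thus for any $B\in\mathcal{I}$, writing $B=\cup_{i=1}^n(B\cap A_i)$ modulo a $P$-null set, one has $P(B)=\sum_{i\in S}P(A_i)$ where $S=\{i:P(B\cap A_i)=P(A_i)\}$. The set of values $P(\mathcal{I})$ is therefore contained in the finite collection $\{\sum_{i\in S}P(A_i):S\subset\{1,\ldots,n\}\}$, which has at most $2^n$ elements. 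Hence $\#(P(\mathcal{I}))<\infty$.

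For the converse, assume $\#(P(\mathcal{I}))<\infty$; I would construct the partition by an exhaustion argument analogous to the proof of Lemma \ref{lem:minimal invariant set} and the construction in the proof of (i) $\Rightarrow$ (ii) in Theorem \ref{thm:1}. Starting from $\O\in\mathcal{I}$ with $P(\O)=1>0$, I look for a $T$-invariant subset of minimal positive $P$-measure. The finiteness of $P(\mathcal{I})$ is precisely what guarantees that a minimizer exists: the set $\{P(B):B\in\mathcal{I},\ B\subset A,\ P(B)>0\}$ is a nonempty finite set, so it attains its infimum, yielding an invariant $A_1$ with $P(A_1)>0$ such that no invariant $B\subset A_1$ has $0<P(B)<P(A_1)$, i.e.\ condition (iii). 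One then repeats the argument on $\O\setminus A_1$ (still in $\mathcal{I}$, by invertibility of $T$ it is $T$-invariant), extracting $A_2$, and so on. Because each $A_i$ satisfies $P(A_i)>0$ and the $A_i$ are disjoint, $\sum_i P(A_i)\le 1$ forces termination; more sharply, since the values of $P$ on invariant sets lie in the finite set $P(\mathcal{I})$, the process cannot produce infinitely many positive-measure pieces. This yields a finite partition $\a=\{A_1,\ldots,A_n\}$ satisfying (i), (ii) and (iii).

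The main obstacle is verifying termination of the exhaustion and ensuring the resulting collection genuinely partitions $\O$ up to a null set rather than leaving a residual invariant set of positive measure with no minimal invariant subset. The cleanest way to handle this is to exploit $\#(P(\mathcal{I}))<\infty$ directly at two points: first, to guarantee the infimum defining each minimal piece is attained (so condition (iii) is achievable), and second, to bound the number of extraction steps — each step removes a set whose measure is one of the finitely many positive values in $P(\mathcal{I})$, and the running complement's measure strictly decreases through this finite value set, so only finitely many steps are possible. One must also confirm the final residual set is $P$-null; if it were not, it would itself be an invariant set of positive measure, and applying the minimality extraction once more would contradict the assumed termination, so the residual must have $P$-measure zero, completing the partition.
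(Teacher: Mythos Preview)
Your proof is correct and follows essentially the same approach as the paper: the $(\Leftarrow)$ direction is identical (showing $P(B)\in\{\sum_{i\in S}P(A_i):S\subset\{1,\ldots,n\}\}$, hence $\#(P(\mathcal{I}))\le 2^n$), and for $(\Rightarrow)$ both you and the paper extract minimal invariant pieces by exhaustion, the only difference being that you argue the minimum is attained directly from finiteness of the value set while the paper obtains it by contradiction (no minimal piece would yield an infinite strictly decreasing sequence in $P(\mathcal{I})$). One small correction: invertibility of $T$ is not needed to conclude $\O\setminus A_1\in\mathcal{I}$, since $T^{-1}(A^c)=(T^{-1}A)^c$ holds for any measurable $T$.
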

\begin{proof}
	$(\Rightarrow)$ We claim that for any $A'\in\mathcal{I}$ with $P(A')>0$, there exists $A\in\mathcal{I}$ with $A\subset A'$ satisfies (i)-(iii) in Definition \ref{def1}. Indeed, by contradiction, we assume that for any  subset $A\in\mathcal{I}$ of $A'$ with $P(A)>0$ there exists a  nonempty $B\in \mathcal{I}$ with $B\subset A$  such that $0<P(B)<P(A)$. Inductively, we can obtain a sequence $\{B_n\}_{n=1}^\infty$ consisting of invariant subsets of $A'$ such that $P(B_n)>P(B_{n+1})>0$ for each $n\in\mathbb{N}$. This contradicts with Assumption $(*)$, and hence the claim holds.
	
	We now begin with the set $\O\in\mathcal{I}$. Then by the result above there exists $A_1\in\mathcal{I}$ satisfying (i)-(iii) in Definition \ref{def1}.  Next, let $\O_1=\O\setminus A_1$. Then $\O_1\in\mathcal{I}$, and hence using the claim again, we obtain $A_2\in\mathcal{I}$ satisfying (i)-(iii) in Definition \ref{def1}. According to Assumption $(*)$, this process will conclude in a finite number of steps. Thus, we obtain a finite irreducible invariant set partition, that is, $(\O,\mathcal{F},P,T)$ satisfies the finite irreducible invariant set partition assumption. 
	
	($\Leftarrow$) Let $\a=\{A_1,\ldots,A_n\}$ be a finite irreducible invariant set partition. Then for any $A\in\mathcal{I}$ and $j\in\{1,2,\ldots,n\}$, we have  $P(A\cap A_j)=P(A_j)$ or $0$. Let $\mathcal{K}(A):=\{A_j:P(A\cap A_j)\neq 0\}$. Thus, for any $A\in\mathcal{I}$, 
	\begin{align*}
		P(A)=	\left\{ 
		\begin{array}{ll}
			\sum_{\{A_j:P(A\cap A_j)\neq 0\}}P(A_j), & \text{ if }\mathcal{K}(A)\neq \emptyset \\
			0, & \text{ otherwise.}
		\end{array}
		\right.
	\end{align*}
	Thus, we have 
	\[\#(P(\mathcal{I}))\le 2^n<\infty, \]
	so the proof is completed.
\end{proof}

For convenience, if $(\O,\mathcal{F},P,T)$ has a  finite  irreducible invariant set partition $\a$, we always assume this partition is of the form $\a=\{A_1,\ldots,A_n\}$.		
Under the finite	invariant set partition assumption, define the $\s$-algebra $\mathcal{F}_j:=\{F\cap A_j:F\in\mathcal{F}\}$ and $P_j(A)=P(A\cap A_j)/P(A_j)$ on $\mathcal{F}_j$ for $j=1,2,\ldots,n$. Then  each $j\in\{1,2,\ldots,n\}$, $(A_j,\mathcal{F}_j,P_j)$ is a probability space.
If we suppose that the limit  \eqref{eq:20.19} exists for  each $A\in\mathcal{F}$, then the limit
\begin{equation}\label{eq:5.1b}
	\lim_{N\to\infty}\frac{1}{N}\sum_{i=0}^{N-1}P_j(T^{-i}A)
\end{equation} exists for each $A\in\mathcal{F}
$, denoted by $Q_j(A)$.  Then by 
Vitali-Hahn-Saks's theorem (see Lemma \ref{lem:VHStheorem}), we have $Q_j$ is a probability.  It is obvious that $Q_j\in\mathcal{ M}(T)$ and $P_j|_{\mathcal{I}}=Q_j|_{\mathcal{I}}$. 

Now under the above assumption, we construct an upper probability $V_j$ on $(A_j,\mathcal{F}_j)$ for each $j=1,2,\ldots,n$. For fixed  $j=1,2,\ldots,n$, let us begin with a claim that  $P_j\circ T^{-i}$ is absolutely continuous with respect to $Q_j$ for each $i\in\mathbb{Z}$. Indeed, for any $A\in\mathcal{F}_j$ with $Q_j(A)=0$, then $Q_j(A^\infty)=0$, where $A^\infty=\cup_{i=-\infty}^\infty T^{-i}A$. As $A^\infty\in\mathcal{I}$, it follows that for any $i\in\mathbb{Z}$,
\[P_j(T^{-i}A)\le P_j(T^{-i}A^\infty)=P_j(A^\infty)=Q_j(A^\infty)=0,\]
proving this claim. Define 
\[\l^j_{M,N}=\frac{1}{M+N+1}\sum_{i=-M}^NP_j\circ T^{-i},\text{ }M,N\ge0\] and 
\begin{equation}\label{eq:8=}
	V_j(A)=\sup_{M,N\in\mathbb{Z}_+}\{\l^j_{M,N}(A)\}\text{ for each }A\in\mathcal{F}_j,
\end{equation}
where $\mathbb{Z}_+$ is the set of all non-negative integrals.
Now using Vitali-Hahn-Saks's theorem (see Lemma \ref{lem:VHStheorem}) again, for any $\epsilon>0$, there exists $\delta>0$ such that  for any $A\in\mathcal{F}_j$ if $Q_j(A)<\d$ then $\l^j_{M,N}(A)<\e$ for all $M,N\in\mathbb{Z}_+$. Thus, it is easy to check that $V_j$ is continuous, and hence $V_j$ is an upper probability. From the definition of $V_j$, we have that $V_j$ is $T$-invariant. Then $(A_j,\mathcal{F}_j,V_j,T)$ is an UPPS. Let 
\begin{equation}\label{eq:21.24}
	V=\max_{1\le j\le n}V_j. 
\end{equation}
Then $(\O,\mathcal{F},V,T)$ is an UPPS.
We have the following properties.
\begin{prop}\label{prop:1}
	Let $(\O,\mathcal{F},P)$ be a probability space and $T:\O\to \O$ be a measurable invertible transformation. Suppose that $(\O,\mathcal{F},P,T)$ satisfies the finite irreducible invariant set partition assumption. Assume that for each $j=1,2,\ldots,n$, the limit
	\begin{equation}\label{eq:12.41}
		Q_j:=\lim_{N\to\infty}\frac{1}{N}\sum_{i=0}^{N-1}P_j\circ T^{-i}
	\end{equation}
	exists in the sense of \eqref{eq:5.1b}. Thus, for upper probabilities $V_j$, $V$ defined as \eqref{eq:8=} and \eqref{eq:21.24},  the following four statements hold:
	\begin{enumerate}
		\item[(i)] for each $j=1,2,\ldots,n$, $(A_j,\mathcal{F}_j,Q_j,T)$ is an ergodic measure-preserving system;
		\medskip
		\item [(ii)]for each $j=1,2,\ldots,n$, $(A_j,\mathcal{F}_j,V_j,T)$ is an ergodic UPPS;
		\medskip
		\item [(iii)]$(\O,\mathcal{F},V,T)$ is of finite ergodic components;
		\medskip
		\item [(iv)]$V(\mathcal{I})=\{0,1\}$.
	\end{enumerate}
\end{prop}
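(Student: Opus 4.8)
The plan is to prove (i), (ii) and (iv) by direct computation built on the single structural hypothesis, namely the irreducibility condition (iii) of Definition~\ref{def1}, and then to read off (iii) from (iv) using Theorem~\ref{thm:1}. The unifying observation is that $P_j$, $Q_j$ and $V_j$ all agree on $T$-invariant sets: for invariant $B$ we have $P_j(T^{-i}B)=P_j(B)$ for all $i$, so $\lambda^j_{M,N}(B)=P_j(B)$ and hence $V_j(B)=\sup_{M,N}\lambda^j_{M,N}(B)=P_j(B)$, while $Q_j|_{\mathcal{I}}=P_j|_{\mathcal{I}}$ by construction. Thus every statement about invariant sets collapses to a statement about the single quantity $P_j(\,\cdot\,)=P(\,\cdot\cap A_j)/P(A_j)$.

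For (i), since $A_j\in\mathcal{I}$ and $T$ is invertible, $T$ restricts to a bijection of $A_j$ and $Q_j\in\mathcal{M}(T)$, so $(A_j,\mathcal{F}_j,Q_j,T)$ is measure-preserving. Given an invariant $B\subset A_j$, the observation above gives $Q_j(B)=P(B)/P(A_j)$; since $B$ is a $T$-invariant subset of $A_j$, property (iii) of Definition~\ref{def1} forces $P(B)\in\{0,P(A_j)\}$, i.e.\ $Q_j(B)\in\{0,1\}$, which is ergodicity. For (ii), the same observation gives $V_j(B)=Q_j(B)$ and $V_j(A_j\setminus B)=Q_j(A_j\setminus B)=1-Q_j(B)$ for invariant $B\subset A_j$; by (i) one of these two values is $0$, which is precisely ergodicity of the UPPS $(A_j,\mathcal{F}_j,V_j,T)$ in the sense of \cite{FengZhao2021}.

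For (iv), let $A\in\mathcal{I}$. For each $j$ the observation yields $V_j(A)=P_j(A)=P(A\cap A_j)/P(A_j)$, and $A\cap A_j$ is a $T$-invariant subset of $A_j$ (using $T^{-1}A=A$ and $T^{-1}A_j=A_j$), so irreducibility again gives $V_j(A)\in\{0,1\}$. Hence $V(A)=\max_{1\le j\le n}V_j(A)\in\{0,1\}$, that is $V(\mathcal{I})=\{0,1\}$. Statement (iii) is then immediate: $(\O,\mathcal{F},V,T)$ is an UPPS, so the equivalence (i)$\Leftrightarrow$(ii) of Theorem~\ref{thm:1} turns the just-proved $V(\mathcal{I})=\{0,1\}$ into the assertion that $V$ is of finite ergodic components.

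The computations are all elementary; the points needing care are purely bookkeeping. First, one must check that $V_j$ genuinely sees only $A\cap A_j$, which follows from $T^{-i}A\cap A_j=T^{-i}(A\cap A_j)$ by invariance of $A_j$. Second, in verifying Feng--Zhao ergodicity in (ii) the complement $B^c$ must be read as $A_j\setminus B$ inside the factor $(A_j,\mathcal{F}_j)$. The only genuinely conceptual step is the last one: rather than verifying (iii) by hand from the definition of finite ergodic components---which would force us to identify $\operatorname{core}(V)$ and the auxiliary upper probabilities in \eqref{eq:maindeifnition}---we obtain it for free from (iv) through Theorem~\ref{thm:1}, where all the real work has already been spent.
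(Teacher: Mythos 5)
Your proof is correct, but it routes the implications differently from the paper. The paper proves (i) by the same computation you give, then obtains (ii) by citing an external result (\cite[Remark 4.6]{FHLZ}, which upgrades ergodicity of the skeleton $Q_j$ to ergodicity of $V_j$), then asserts (ii) $\Rightarrow$ (iii) ``from the definition of finite ergodic components,'' and finally deduces (iv) from (iii) via the easy direction (ii) $\Rightarrow$ (i) of Theorem \ref{thm:1}. You instead prove (i), (ii) and (iv) all at once from the single observation that $\lambda^j_{M,N}$, hence $V_j$, agrees with $P_j$ (and so with $Q_j$) on invariant sets, and only then recover (iii) from (iv) via the opposite direction (i) $\Rightarrow$ (ii) of Theorem \ref{thm:1}. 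Your route buys two things. First, (ii) becomes self-contained: no appeal to the cited remark is needed, just the identity $V_j(B)=P_j(B)\in\{0,1\}$ for invariant $B\subset A_j$ together with irreducibility. Second, deducing (iii) from (iv) sidesteps a point the paper's ``(ii) $\Rightarrow$ (iii) is from the definition'' step glosses over: the definition of finite ergodic components requires ergodicity of the upper probabilities $\sup\{P(\cdot):P\in\operatorname{core}(V),\,P(A_i)=1\}$ of \eqref{eq:maindeifnition}, which are a priori different objects from the $V_j$ of \eqref{eq:8=}; identifying the two (via $T^{-i}A\cap A_j=T^{-i}(A\cap A_j)$ and $\lambda^j_{M,N}\in\operatorname{core}(V)$) is exactly the bookkeeping you avoid, and you are right to flag it. The price is that your (iii) leans on the substantial construction behind the direction (i) $\Rightarrow$ (ii) of Theorem \ref{thm:1}, whereas the paper invokes only the easy direction; since that theorem is fully proved before this proposition, this is a matter of economy rather than correctness.
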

\begin{proof}
	We finish the proof by  proving (i), (i) $\Rightarrow$ (ii), (ii) $\Rightarrow$ (iii) and (iii) $\Rightarrow$ (iv).
	
	Firstly, we prove (i). Indeed, for any $j\in\{1,2,\ldots,n\}$ and $P_j:= \frac{P(\cdot\cap A_j)}{P(A_j)}$, it is easy to see that $P_j|_{\mathcal{I}}=Q_j|_{\mathcal{I}}$. By  finite irreducible invariant set partition assumption, for any $A\in\mathcal{I}\cap\mathcal{F}_j$, $P_j(A)\in\{0,1\}$, and hence $Q_j(A)\in\{0,1\}$. Thus, $Q_j$ is ergodic on $(A_j,\mathcal{F}_j)$ with respect to $T$, as $Q_j$ is $T$-invariant.

	(i) $\Rightarrow$ (ii) is proved in \cite[Remark 4.6]{FHLZ}.  
	
	(ii) $\Rightarrow$ (iii) is from the definition of finite ergodic components.
	
	(iii) $\Rightarrow$ (iv) follows from Theorem \ref{thm:1}.
\end{proof}

The following result is a version of Birkhoff's ergodic theorem for non-invariant probabilities.
\begin{theorem}\label{thm:noninvariant}
	Let $(\O,\mathcal{F},P)$ be a probability space and $T:\O\to \O$ be a measurable invertible transformation. If the limit \eqref{eq:20.19} exits, then the following statements are equivalent:
	\begin{enumerate}
		\item [(i)] $\#(P(\mathcal{I}))<\infty$;
		\medskip
		\item [(ii)] $(\O,\mathcal{F},P,T)$ satisfies the finite irreducible invariant set partition assumption; 
		\medskip
		\item [(iii)]there exist $Q_1,\ldots,Q_n\in\mathcal{M}^e(T)$ and  a finite measurable partition $\a=\{A_1,\ldots,A_n\}$ with $P(A_i)>0$, $i=1,2,\ldots,n$, such that for any $f\in B(\O,\mathcal{F})$, 
		\begin{equation}\label{eq:12.38}
			\lim_{N\to\infty}\frac{1}{N}\sum_{i=0}^{N-1}f\circ T^i=\sum_{j=1}^{n}\left(\int fdQ_j\right)\cdot1_{A_j},  ~~P\text{-a.s.}
		\end{equation}
		\item [(iv)]there exist  $Q_1,\ldots,Q_n\in\mathcal{ M}^e(T)$  and  a finite  measurable partition $\a=\{A_1,\ldots,A_n\}
		$ with $P(A_i)>0$, $i=1,2,\ldots,n$ such that 
		\begin{equation}\label{eq:15.54}
			\lim_{N\to\infty}\frac{1}{N}\sum_{i=0}^{N-1}P(B\cap T^{-i}C)=\sum_{j=1}^nQ_j(C)P(A_j\cap B)\text{ for any }B,C\in\mathcal{F}.
		\end{equation}
	\end{enumerate}
\end{theorem}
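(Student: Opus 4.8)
The plan is to establish the chain of equivalences $(i)\Leftrightarrow(ii)$ first, and then to funnel everything through the upper-probability machinery built in Proposition~\ref{prop:1} and the finite-ergodic-component theorems of Sections~\ref{sec:ergodic decomposition}--\ref{sec:independence}. The equivalence $(i)\Leftrightarrow(ii)$ is already furnished by Lemma~\ref{lem:good partition}, so I would simply cite it. The remaining work is to connect the combinatorial/partition statement~$(ii)$ to the two dynamical statements $(iii)$ and $(iv)$, and here the idea is that the non-invariant probability $P$ can be ``lifted'' to an invariant upper probability $V$ to which the earlier theory applies verbatim.

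First I would assume~$(ii)$ and construct the objects $P_j$, $Q_j$, $V_j$, and $V=\max_{1\le j\le n}V_j$ exactly as in the paragraph preceding Proposition~\ref{prop:1}; the existence of the limits~\eqref{eq:5.1b} is guaranteed once~\eqref{eq:20.19} exists, since each $P_j$ is obtained from $P$ by conditioning on the invariant set $A_j$. Proposition~\ref{prop:1} then tells me that $(\O,\mathcal{F},V,T)$ is of finite ergodic components with ergodic skeletons $Q_1,\ldots,Q_n$ and invariant partition $\{A_1,\ldots,A_n\}$, where each $Q_j\in\mathcal{M}^e(T)$. To obtain the pointwise convergence~\eqref{eq:12.38} in statement~$(iii)$, I would apply the classical Birkhoff ergodic theorem on each ergodic system $(A_j,\mathcal{F}_j,Q_j,T)$: on $A_j$ the time average of $f$ converges $Q_j$-a.s. to $\int f\,dQ_j$. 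The subtle point is upgrading $Q_j$-a.s.\ convergence to $P$-a.s.\ convergence. This is where I expect the \emph{main obstacle} to lie, and the device is Hurewicz-type absolute continuity: the claim established just before~\eqref{eq:8=} shows $P_j\circ T^{-i}\ll Q_j$ for every $i\in\mathbb{Z}$, and the uniform absolute continuity from Vitali--Hahn--Saks (Lemma~\ref{lem:VHStheorem}) gives $P_j\ll Q_j$, hence $P_j\ll Q_j$ on $A_j$. Since a $Q_j$-null convergence-failure set on $A_j$ is then $P_j$-null, and $P(A_j)>0$, summing over $j$ yields~\eqref{eq:12.38} holding $P$-a.s. on all of $\O=\cup_j A_j$.

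Next I would derive~$(iv)$ from~$(iii)$ by the same averaging-and-dominated-convergence computation used in Theorem~\ref{thm:ergodic via independece of core}. Writing $\frac{1}{N}\sum_{i=0}^{N-1}P(B\cap T^{-i}C)=\frac1N\sum_{i=0}^{N-1}\int 1_B\cdot(1_C\circ T^i)\,dP$ and applying~\eqref{eq:12.38} with $f=1_C$ together with the dominated convergence theorem (the integrands are bounded by $1$), the limit becomes $\int 1_B\cdot\bigl(\sum_{j=1}^n Q_j(C)1_{A_j}\bigr)\,dP=\sum_{j=1}^n Q_j(C)\,P(A_j\cap B)$, which is exactly~\eqref{eq:15.54}. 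Conversely, to close the loop I would show $(iv)\Rightarrow(i)$: specializing~\eqref{eq:15.54} to $B=\O$ and $C=A\in\mathcal{I}$ gives $\lim_N\frac1N\sum_i P(T^{-i}A)=\sum_j Q_j(A)P(A_j)$, and since each $Q_j$ is ergodic, $Q_j(A)\in\{0,1\}$ for $A\in\mathcal{I}$; hence the limit ranges over the finitely many subset-sums $\sum_{j\in S}P(A_j)$ indexed by $S\subset\{1,\ldots,n\}$. As the left-hand limit equals $P(A)$ whenever $P$-invariance fails only mildly, one concludes $P(\mathcal{I})$ is finite, giving~$(i)$. I would present the equivalences in the cyclic order $(i)\Leftrightarrow(ii)\Rightarrow(iii)\Rightarrow(iv)\Rightarrow(i)$, so that the single delicate measure-theoretic transfer (from $Q_j$-a.s.\ to $P$-a.s.) is isolated in the step $(ii)\Rightarrow(iii)$ and everything else reduces to bounded convergence and the ergodicity of the skeletons.
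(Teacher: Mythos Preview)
Your proposal is correct, and the backbone---cite Lemma~\ref{lem:good partition} for $(i)\Leftrightarrow(ii)$, build the $Q_j$'s via Proposition~\ref{prop:1}, and close the loop by specializing to invariant sets---matches the paper. The organization differs in one substantive way: for $(ii)\Rightarrow(iii)$ and $(ii)\Rightarrow(iv)$ the paper simply invokes Theorems~\ref{thm:Birkhoff's ergodic} and~\ref{thm:ergodic via independece of core} as black boxes (Proposition~\ref{prop:1} gives that $V$ has finite ergodic components, and since $P\in\operatorname{core}(V)$ every $V$-a.s.\ statement descends to a $P$-a.s.\ one), whereas you sidestep the upper probability $V$ at that point and argue directly with classical Birkhoff on each $(A_j,\mathcal{F}_j,Q_j,T)$ together with $P_j\ll Q_j$, then deduce $(iv)$ from $(iii)$ by dominated convergence. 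Your route is a touch more elementary and yields the cycle $(ii)\Rightarrow(iii)\Rightarrow(iv)\Rightarrow(i)$; the paper's route goes through two separate legs $(ii)\Rightarrow(iii)\Rightarrow(i)$ and $(ii)\Rightarrow(iv)\Rightarrow(i)$ and better advertises the nonlinear machinery of the earlier sections. One cosmetic fix: in your $(iv)\Rightarrow(i)$ step, drop the hedge ``whenever $P$-invariance fails only mildly''---for $A\in\mathcal{I}$ one has $T^{-i}A=A$ exactly, so with $B=\Omega$ and $C=A$ the left side of~\eqref{eq:15.54} is $P(A)$ on the nose, giving $P(A)=\sum_j Q_j(A)P(A_j)$ with $Q_j(A)\in\{0,1\}$ and hence $\#(P(\mathcal{I}))\le 2^n$, exactly as in the paper's argument for $(iii)\Rightarrow(i)$.
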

\begin{proof}
	The equivalence of (i) and (ii) is from Lemma \ref{lem:good partition}.	
	
	(ii) $\Rightarrow$ (iii) is from Proposition \ref{prop:1} and Theorem \ref{thm:Birkhoff's ergodic}.
	
	(iii) $\Rightarrow$ (i). For any $A\in\mathcal{I}$, letting 
	$f=1_A$ in \eqref{eq:12.38}, we have that 
	\[1_A=\sum_{j=1}^nQ_j(A)1_{A_j},\text{ }P\text{-a.s.}\]
	Integrating both side with respect to $P$, we have that
	\[P(A)=\sum_{j=1}^nQ_j(A)P(A_j).\]
	Thus, 
	\[P(\mathcal{I})\subset \sum_{j=1}^nQ_j(\mathcal{I})P(A_j):=\{a\in[0,1]:a=\sum_{j=1}^nQ_j(A)P(A_j),\text{ for some }A\in\mathcal{I}\}.\]

	Since $Q_j$ is ergodic, $Q_j(\mathcal{I})\in\{0,1\}$, for $j=1,2,\ldots,n$, it follows that $\#(P(\mathcal{I}))\le 2^n<\infty$.
	
	(ii) $\Rightarrow$ (iv) is from Proposition \ref{prop:1} and Theorem \ref{thm:ergodic via independece of core}.
	
	(iv) $\Rightarrow$ (i). For any $A\in\mathcal{I}$, letting 
	$B=\O$  and $C=A$ in \eqref{eq:15.54}, we have that 
	\[P(A)=\sum_{j=1}^nQ_j(A)P(A_j),\]
	which, by the same argument of ``(iii) $\Rightarrow$ (i)'', implies (i). 
\end{proof}

\begin{rem}
	In the end, note that for any $A\in\mathcal{F}$, $\cup_{i=-\infty}^\infty T^{-i} A\in\mathcal{I}$, and hence we have 
	\[\mathcal{I}=\{\cup_{i=-\infty}^\infty T^{-i}A:A\in\mathcal{F}\}.\]
	Thus, the condition $\#(P(\mathcal{I}))<\infty$ is equivalent to the condition that $\#(P(\{\cup_{i=-\infty}^\infty T^{-i}A:A\in\mathcal{F}\})<\infty$. This  facilitates	a method to check Assumption $(*)$.
\end{rem}


\begin{funding}
The second author was partially supported by NNSF of China (12090012, 12090010, 12031019).
The fourth author was supported  by the Royal Society Newton Fund grant (ref. NIF \textbackslash R1\textbackslash 221003) and an EPSRC Established Career Fellowship  
(ref. EP/S005293/2).
\end{funding}

\end{document}